\documentclass[a4paper,11pt,reqno]{amsart}

\usepackage[margin=1in]{geometry}
\usepackage{amsmath,amssymb,amsthm,mathrsfs,mathtools}
\usepackage{enumitem}
\usepackage{hyperref}
\usepackage{xurl}
\usepackage{cleveref}

\usepackage{xcolor}

\usepackage{tikz}
\usepackage{eso-pic}

\usepackage{microtype}
\usepackage[T1]{fontenc}

\hypersetup{colorlinks=true,linkcolor=blue,citecolor=blue,urlcolor=blue}

\newtheorem{theorem}{Theorem}
\newtheorem{lemma}[theorem]{Lemma}

\theoremstyle{definition}
\newtheorem{definition}[theorem]{Definition}

\theoremstyle{remark}
\newtheorem{remark}{Remark}

\DeclareMathOperator{\CWP}{CWP}
\DeclareMathOperator{\val}{val}
\newcommand{\Free}{\mathbb F}
\newcommand{\PL}{\mathrm{PL}}

\newcommand{\defp}{\operatorname{def}_{+}}
\newcommand{\wedgek}[1]{\bigvee_{#1} S^2}

\newcommand{\tri}{\mathcal{T}}
\newcommand{\lk}{\operatorname{lk}}
\newcommand{\cs}{\mathbin{\#}}
\newcommand{\bs}{\mathbin{\natural}}

\title{PL Recognition After Two $S^{2}\times S^{2}$ Stabilisations is PSPACE-Hard}
\author{Rhuaidi Antonio Burke}
\address{Mathematical Institute, University of Oxford, Oxford OX2 6GG, United Kingdom}
\email{rhuaidi.burke@maths.ox.ac.uk}
\date{\today}

\begin{document}
	
	\begin{abstract}
		Let $Z$ be any fixed closed connected PL $4$-manifold. We give a
		polynomial-time many-to-one reduction from the compressed word problem in
		Thompson's group $F$ to fixed-target recognition of
		$Z\cs_{2}(S^{2}\times S^{2})$. 
		Once a finite triangulation of $Z$ has been fixed, the construction sends a straight-line program $A$ to a closed triangulated PL $4$-manifold $X_{A,Z}$ such that
		\[
		\val(A)=1\text{ in }F
		\quad\Longleftrightarrow\quad
		X_{A,Z}\cong_{\PL}Z\cs_{2}(S^{2}\times S^{2}).
		\]
		Since the compressed word problem in $F$ is PSPACE-complete, every such
		recognition problem is PSPACE-hard. 
		In particular, fixed-target recognition of $\cs_n(S^2\times S^2)$ is
		PSPACE-hard for every fixed $n\geq2$.
	\end{abstract}
	
	\vspace*{-0.5em}
	\maketitle
	
	\section{Introduction}
	\label{sec:intro}	
	One of the fundamental problems in topology is to determine whether two manifolds are equivalent in a specified category, such as the topological, smooth, or piecewise linear (PL) category. 
	For a fixed closed PL $n$-manifold $X$, the \emph{PL recognition problem} asks, given a finite PL triangulation of a closed $n$-manifold $Q$, whether $Q$ is PL-homeomorphic to $X$. 
	If no algorithm solves this problem, we say that $X$ is \emph{PL-unrecognisable}. 
	The \emph{topological recognition problem} uses the same input class and asks whether $Q$ is homeomorphic to $X$; if no algorithm solves this problem, we say that $X$ is \emph{topologically unrecognisable}. 
	The decidability and the complexity of recognition depend strongly on the dimension, the chosen target, and the category of equivalence.
	
	\begin{remark}
		In dimensions at most three, every topological manifold admits essentially unique PL and smooth structures. Consequently, for such manifolds the notions of homeomorphism, PL-homeomorphism, and diffeomorphism coincide. Thus, in dimensions at most three, there is no need to
		distinguish between the corresponding homeomorphism, PL-homeomorphism, and
		diffeomorphism recognition problems; it suffices to consider the homeomorphism problem.
		Moreover, every PL manifold of dimension at most six admits a unique compatible smoothing up to isotopy \cite{HirschMazur,Munkres-Smoothing}. We may therefore use smooth and PL arguments interchangeably for the $4$- and $5$-manifolds appearing below.
	\end{remark}
	
	In dimensions one and two, the classification of manifolds gives direct
	recognition and homeomorphism algorithms. For closed orientable 3-manifolds, Kuperberg \cite{Kuperberg} gives a homeomorphism algorithm whose running time is bounded by a tower of exponentials of bounded height.
	
	The situation changes in dimension four. Markov \cite{Markov1960} proved that for every
	$n\geq 4$ there is a closed $n$-manifold whose recognition problem is
	undecidable. In particular, the general homeomorphism problem
	is undecidable in every dimension $n\geq 4$. In the topological category, Shtan'ko \cite{Shtanko} and
	Chernavsky--Leksine \cite{ChernavskyLeksine} established that $\cs_{14}(S^2\times S^2)$ is unrecognisable. 
	Gordon \cite{Gordon} later reduced the number of summands to twelve, with Tancer \cite{Tancer} then reducing it to nine. More recently, Kegel--Li--Ren \cite{KegelLiRen2026} have further reduced the number of summands to seven. In the smooth category, Gordon's construction establishes the smooth unrecognisability of $\cs_{12}(S^2\times S^2)$, though he stated his result in the context of the topological category. In the same work of Kegel--Li--Ren, the smooth unrecognisability of $\cs_9(S^2\times S^2)$ is also established. 
	
	Even for the $n$-sphere there is a sharp division between the known cases. Recognition of $S^n$ is elementary for $n\leq 2$. Rubinstein \cite{Rubinstein} and Thompson \cite{Thompson} proved that $S^3$ is recognisable, with Schleimer \cite{Schleimer-s3np} and, independently, Ivanov \cite{Ivanov-s3np} later showing that $3$-sphere recognition lies in the complexity class $\mathrm{NP}$. 
	On the other hand, Novikov \cite[Section 10]{VKF} proved that recognition of $S^n$ is
	undecidable for every $n\geq 5$. The remaining case is
	dimension four, where it is currently not known whether the standard PL $4$-sphere can be
	recognised algorithmically.
	
	\begin{remark}
		The undecidability of the general homeomorphism problem in dimension four does
		not settle the recognition problem for a prescribed $4$-manifold. These are
		different decision problems. In the general problem, both input manifolds
		vary. For a fixed manifold $X$, by contrast, $X$-recognition asks whether a
		single input manifold is PL-homeomorphic to that particular $X$.
		
		Thus general undecidability rules out one algorithm that compares every pair
		of input $4$-manifolds. It does not imply that $X$-recognition is undecidable
		for every $X$, or for any particular $X$ chosen in advance. In principle,
		each fixed manifold could admit its own recognition algorithm without there
		being a uniform procedure that obtains or combines these algorithms to solve
		the general problem. General undecidability therefore leaves open the
		recognisability of specific manifolds.
	\end{remark}
	
	The results above leave open what can be proved for small stabilisations of $S^4$. Our main result shows that fixed-target PL recognition of
	\[
	\cs_2(S^2\times S^2)
	\]
	among closed triangulated PL $4$-manifolds is PSPACE-hard under polynomial-time many-to-one reductions. More generally, this hardness persists after taking connected sum with any fixed closed connected PL $4$-manifold---for every such manifold $Z$, recognition of
	\[
	Z\cs_2(S^2\times S^2)
	\]
	is PSPACE-hard. Here $Z$ is fixed in advance and is not part of the input. Taking $Z=\cs_{n-2}(S^2\times S^2)$, with $Z=S^4$ when $n=2$, shows that fixed-target recognition of $\cs_n(S^2\times S^2)$ is PSPACE-hard for every fixed $n\geq2$.
	
	The reduction starts from the compressed word problem for Thompson’s group $F$, which has recently been shown to be PSPACE-complete \cite{BFLaW}. An instance is a finite list $A$ of instructions for building a word (a so-called \emph{straight-line program}). Each instruction either supplies a generator or its inverse, or concatenates two words produced by earlier instructions. One instruction is designated as the output and produces the word $\val(A)$. From $A$ we construct in polynomial time a finite presentation $P_A$ and then a closed triangulated PL $4$-manifold $X_A=X(P_A)$. If $\val(A)=1$ in $F$, the relator tuple of $P_A$ can be transformed by Andrews--Curtis moves into a tuple consisting of its generators followed by two empty relators; a thickening argument then identifies $X_A$ with $\cs_2(S^2\times S^2)$. If $\val(A)\ne1$, then $F$ embeds in the group presented by $P_A$, denoted $G(P_A)$, while $\pi_1(X_A)\cong G(P_A)$.
	
	Fixing a triangulation of $Z$, we form $X_{A,Z}:=X_A\cs Z$. In a yes-instance this is the required target. In a no-instance, the Seifert--van Kampen and Grushko--Neumann theorems give
	\[
	\pi_1(X_{A,Z})\cong G(P_A)*\pi_1(Z),
	\qquad
	d\bigl(\pi_1(X_{A,Z})\bigr)=d(G(P_A))+d(\pi_1(Z))>d(\pi_1(Z)),
	\]
	where $d(H)$ denotes the minimum number of generators of a finitely generated group $H$. The target has fundamental group $\pi_1(Z)$, so the no-instance cannot be PL-homeomorphic to it.
	
	A central feature of the construction is that the program $A$ is used directly---the potentially exponentially long word $\val(A)$ is never expanded. The presentation is converted into a polynomial-size Kirby diagram for the $2$-handlebody associated to $P_A$, and then into a polynomial-size triangulation of the double of this handlebody. The final connected sum with the fixed manifold $Z$ has constant overhead. Both yes- and no-instances therefore produce triangulations of closed PL $4$-manifolds.
	
	\subsection*{Organisation of the paper}
	\Cref{sec:background} fixes conventions and introduces surplus Andrews–Curtis triviality.
	\Cref{sec:main-statements} states the fixed-target recognition problem and the main theorem precisely.
	\Cref{sec:alg-input,sec:miller-tancer-gadgetry,sec:final-presentation} construct and analyse the presentation $P_A$. \Cref{sec:triangulation} converts finite presentations into triangulated closed $4$-manifolds, and \Cref{sec:output-topology} identifies their fundamental groups and their PL types in the Andrews--Curtis-trivial case. 
	\Cref{sec:proof-of-theorem} proves the main theorem. 
	Finally, \Cref{sec:final-remarks} explains the structural obstacles to
	reaching the lower simply connected targets through the direct
	presentation--thickening pipeline presented in the paper.
		
	\subsection*{Acknowledgements}
	The author would like to thank Marc Lackenby, Tobias Hirsch, Christopher Douglas, and Maria Stuebner for reading early drafts of this paper and for their many helpful comments and suggestions.
	This work was supported by the EPSRC grant EP/Y016270/1, ``Computable manifolds''.
	
	\subsection*{Statement on LLM use}
	ChatGPT-5.6 Sol Pro was used to assist with algebraic calculations and to act as an adversarial reviewer during the preparation of this paper.

	\section{Preliminaries}
	\label{sec:background}
	In this section we fix the main conventions used in the paper. 
	
	\subsection*{Triangulations}
	Throughout, a \emph{triangulation} $\tri$ will refer to a finite collection of $n$ abstract $d$-simplices, some or all of whose facets are affinely identified in pairs. 
	We allow facets of the same $d$-simplex to be identified. The gluings defining $\tri$ also have the effect of merging the lower-dimensional faces of the simplices into equivalence classes, which we refer to as the \emph{faces} of $\tri$. We say that $\tri$ is \emph{valid} if the gluings never identify a face with itself along a non-identity~map.
	
	The \emph{link} $\lk(v)$ of a vertex $v$ of $\tri$ is the frontier of a small regular neighbourhood of $v$. We treat vertex links as triangulated $(d-1)$-dimensional spaces, formed by inserting a small $(d-1)$-simplex into each corner of each $d$-simplex, and then joining together the $(d-1)$-simplices from adjacent $d$-simplices along their facets. 
	
	Throughout, we restrict attention to valid triangulations whose vertex links are either PL $(d-1)$-balls or closed PL $(d-1)$-manifolds.
	A vertex is called \emph{finite} if its link is a PL sphere, \emph{boundary} if its link is a PL ball, and \emph{ideal} if its link is a closed PL manifold that is not a PL sphere.
	If $\tri$ has at least one ideal vertex, we call $\tri$ an {\em ideal triangulation}. We think of an ideal vertex as not being part of $\tri$, but instead representing a boundary component homeomorphic to the vertex link. We say that such boundary components form the \emph{ideal boundary} of $\tri$. If all vertex links of $\tri$ are piecewise linear triangulations of spheres, then the triangulation $\tri$ is a \emph{piecewise linear} triangulation of a \emph{closed} manifold. If all vertex links of $\tri$ are PL triangulated spheres or balls, $\tri$ is a PL triangulation of a manifold with \emph{real boundary}, where the real boundary is given by the unglued facets in the triangulation. If $\tri$ is a PL triangulation, with or without real boundary, then the underlying set, or the {\em carrier}, of $\tri$ is a PL manifold denoted by $|\tri|$. For ideal triangulations $\tri$, the carrier denotes the underlying set of $\tri$ with ideal vertices removed.
	
	We write $f_i(\tri)$ to denote the number of $i$-dimensional simplices in $\tri$.

	\subsection*{Spines, thickenings and 3-deformations} 
	A \emph{spine} $K$ of a compact PL manifold $X$ is a finite subpolyhedron $K\subset\operatorname{int}(X)$ such that $X$ collapses onto $K$, denoted $X\searrow K$, where collapses are understood after suitable triangulation and subdivision. Equivalently, $X$ is a regular neighbourhood of $K$. A thickening of a finite complex $K$ is a compact manifold equipped with a PL embedding of $K$ as a spine; we identify $K$ with its embedded image when convenient. For smooth manifolds, these conditions refer to the compatible PL structure. A \emph{$3$-deformation} is a finite sequence of elementary expansions and collapses through complexes of dimension at most three. We refer the reader to \cite{Matveev-3Manifolds} for more details.
	
	\subsection*{Andrews--Curtis conventions}
	\label{sec:ac-setup}
	
	\begin{definition}[Surplus Andrews--Curtis triviality]
		Let $X=\{x_1,\dots,x_n\}$, $d\geq 0$, and let
		\[
		P=\langle X\mid r_1,\dots,r_{n+d}\rangle .
		\]
		We say that $P$ is $d$-Andrews--Curtis trivial, or $d$-AC-trivial, if the
		$(n+d)$-tuple $(r_1,\dots,r_{n+d})$ in the free group $\Free(X)$ is
		Andrews--Curtis equivalent to
		\[
		(x_1,\dots,x_n,1,\dots,1),
		\]
		with $d$ trailing empty relators.
		
		The Andrews--Curtis moves allowed are
		\[
		r_i\mapsto r_i^{-1},\qquad
		r_i\mapsto r_i r_j^{\pm1}\quad(i\neq j),\qquad
		r_i\mapsto u r_i u^{-1}\quad(u\in \Free(X)),
		\]
		together with permutation of relators.
	\end{definition}
	
	\begin{lemma}
		\label{lem:ambient-normal-closure}
		Let $Y$ be a finite generating set, and let $T=(t_1,\dots,t_q)$ be a tuple of
		relators in $\Free(Y)$. Suppose $b_1,\dots,b_k$ are entries of $T$, and let
		$a$ be another entry of $T$. If
		\[
		U\in\left\langle\!\left\langle b_1,\dots,b_k\right\rangle\!\right\rangle_{\Free(Y)},
		\]
		then $T$ is Andrews--Curtis equivalent to the tuple obtained from $T$ by
		replacing $a$ by $aU$, while leaving every other entry unchanged.
	\end{lemma}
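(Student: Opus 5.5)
We prove the lemma by writing $U$ as a product of conjugates of the $b_i^{\pm1}$ and realising the multiplication by $U$ one conjugate factor at a time.

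\textbf{Decomposition of $U$.} By definition of the normal closure, we can write
\[
U=\prod_{j=1}^{m} w_j\, b_{i_j}^{\varepsilon_j}\, w_j^{-1},
\qquad w_j\in\Free(Y),\ \varepsilon_j=\pm1,\ 1\le i_j\le k .
\]
We induct on $m$. The case $m=0$ is trivial, since then $U=1$. Note that AC equivalence is an equivalence relation, because each listed move is invertible by moves of the same type. It therefore suffices to show that, for a single conjugate $c=wb^{\varepsilon}w^{-1}$ with $b$ an entry of $T$ other than $a$, the tuple with $a$ replaced by $a'$ is AC equivalent to the tuple with $a$ replaced by $a'c$. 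Here $a'$ is whatever currently occupies the position of $a$, and all other entries are unchanged. Applying this $m$ times, with $a'=a\prod_{j<l}(\cdots)$ at step $l$, gives the claim.

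\textbf{Single step.} The allowed move is right multiplication $r_i\mapsto r_ir_j^{\pm1}$, while conjugation acts on a whole entry, so we move the target entry into position to absorb $c$. Starting from the tuple with entry $a'$, we proceed as follows.
\begin{enumerate}
\item Conjugate the entry $a'$ by $w^{-1}$, obtaining $w^{-1}a'w$.
\item Multiply on the right by $b^{\varepsilon}$, obtaining $w^{-1}a'wb^{\varepsilon}$.
\item Conjugate by $w$, obtaining $a'wb^{\varepsilon}w^{-1}=a'c$.
\end{enumerate}
Throughout, the entry $b$ is untouched, so every other entry of the tuple is unchanged. This completes the step and the induction.

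\textbf{Main subtlety.} We need $a$ to be a different entry from each $b_i$; the move $r_i\mapsto r_ir_j^{\pm1}$ requires $i\neq j$. The hypothesis "another entry" is read as giving this. If some $b_i$ coincided with $a$ positionally, the argument would break, and indeed the statement could fail, for example with $a=b_1$ and $U=b_1^{-1}$, which would send $a$ to $1$. So we interpret the statement as having $a$ in a position distinct from those of $b_1,\dots,b_k$.

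No other step is an obstacle. Free reduction of words is automatic, since we work in $\Free(Y)$ itself.
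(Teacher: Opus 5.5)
Your proof is correct and follows the paper's argument: write $U$ as a product of conjugates of the $b_j^{\pm1}$ and absorb these factors into $a$ one at a time. The only difference is bookkeeping. You conjugate the entry $a'$ by $w^{-1}$, multiply by $b^{\varepsilon}$, and conjugate back, whereas the paper inverts and conjugates $b_j$, multiplies $a$ by the result, and then restores $b_j$. Your reading of ``another entry'' as a position distinct from those of the $b_i$ is also what the paper's argument requires.
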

	
	\begin{proof}
		Write
		\[
		U=\prod_{\ell=1}^{N}u_\ell b_{j_\ell}^{\varepsilon_\ell}u_\ell^{-1},
		\qquad u_\ell\in\Free(Y),\quad \varepsilon_\ell\in\{-1,1\}.
		\]
		For a single factor $u b_j^{\varepsilon}u^{-1}$, first invert $b_j$ if
		$\varepsilon=-1$, then conjugate $b_j^{\varepsilon}$ by $u$, multiply
		$a$ by the resulting relator, and undo the conjugation and inversion of
		$b_j$. This changes only $a$. Repeating this procedure for all factors
		replaces $a$ by $aU$ and restores every $b_j$ to its original value.
	\end{proof}
	
	\subsection*{Straight-line programs and the compressed word problem}
	A \emph{straight-line program} over an alphabet $\Sigma$ is a
	finite list of word-building instructions with one distinguished output instruction. We use
	the standard normal form in which each gate is either labelled by a
	letter of $\Sigma$ or is the ordered product of two earlier gates.
	A general \emph{straight-line program}, including one with longer right-hand sides, can be put in this form in linear time. In the group-theoretic applications below, $\Sigma$ is a fixed nonempty alphabet closed under inversion. Consequently, an empty-word gate may be replaced, in constant size, by a subprogram representing $aa^{-1}$, where $a\in\Sigma$ is fixed. This replacement preserves the represented group element, although not the formal word. The size of a straight-line program is the number of gates.
	The word $\val(A)$ represented by the output gate can have length exponential in this size.
	
	For a finitely generated group $G$, the \emph{compressed word problem}
	$\CWP(G)$ asks whether $\val(A)=1$ in $G$. All constructions below use
	the circuit directly and never expand $\val(A)$.
	
	\subsection*{Complexity classes}	
	The class $\mathrm{PSPACE}$ consists of decision problems solvable using space polynomial in the input size. Every polynomial-time computation uses polynomial space, and a nondeterministic polynomial-time computation tree can be searched depth-first using polynomial space. Hence
	\[
	\mathrm{P}\subseteq\mathrm{NP}\subseteq\mathrm{PSPACE}.
	\]
	Consequently, every problem that is $\mathrm{PSPACE}$-hard under
	polynomial-time many-to-one reductions is also $\mathrm{NP}$-hard. 
	Note that this does not imply that the problem itself belongs to $\mathrm{PSPACE}$.
	
	\subsection*{Rank and free products}
	For a finitely generated group $G$, write $d(G)$ for the minimum
	cardinality of a generating set. The Grushko--Neumann
	theorem~\cite{Neumann1943} states that if $G$ and $H$ are finitely generated,
	then
	\[
	d(G*H)=d(G)+d(H).
	\]
	
	\section{Main statements}
	\label{sec:main-statements}
	
	For a finite presentation
	\(
	P=\langle x_1,\ldots,x_n\mid r_1,\ldots,r_m\rangle,
	\)
	we write $G(P)$ for the group presented by $P$, and define its relator surplus by
	\[
	\operatorname{def}_+(P):=m-n.
	\]
	Thus $\operatorname{def}_+(P)=k$ means that $P$ has $k$ more relators than generators. This convention is the negative of the usual presentation deficiency.
	
	Let $X$ be a fixed closed connected PL $4$-manifold.
	The fixed-target PL recognition problem for $X$ has as input a finite triangulation $\mathcal{T}$, promised to represent a closed PL $4$-manifold, and asks whether
	\(
	|\mathcal{T}|\cong_{\mathrm{PL}} X.
	\)
	
	Hardness is understood with respect to reductions that always satisfy this promise. More precisely, a polynomial-time many-to-one reduction from a decision problem $\Pi$ is a polynomial-time construction assigning to every instance $A$ of $\Pi$ a finite triangulation $\mathcal{T}_A$ such that $|\mathcal{T}_A|$ is a closed PL $4$-manifold and
	$A$ is a yes-instance of $\Pi$ if and only if $|\mathcal{T}_A|\cong_{\PL} X$.
	In particular, the outputs corresponding to both yes- and no-instances are triangulations of closed PL $4$-manifolds.
	
	\begin{theorem}
		\label{thm:main}
		Let $Z$ be any fixed closed connected PL $4$-manifold. Fixed-target
		PL recognition of
		\[
		Z\cs_2(S^2\times S^2)
		\]
		among closed triangulated PL $4$-manifolds is PSPACE-hard under
		polynomial-time many-to-one reductions satisfying the preceding promise.
		More precisely, fix a finite triangulation of $Z$. There is a polynomial-time algorithm that, given a straight-line program $A$ over the standard generators of Thompson’s group $F$, constructs a triangulation
		$\mathcal{T}_{A,Z}$ of a closed PL $4$-manifold such that
		\[
		\val(A)=1\text{ in }F
		\quad\Longleftrightarrow\quad
		|\mathcal{T}_{A,Z}|\cong_{\mathrm{PL}}
		Z\cs_2(S^2\times S^2).
		\]
		Consequently, for every fixed $n\geq2$, fixed-target PL recognition of
		$\cs_n(S^2\times S^2)$ is PSPACE-hard. All these recognition problems
		are also NP-hard.
	\end{theorem}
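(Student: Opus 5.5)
The reduction is the composite pipeline sketched in the introduction, starting from the PSPACE-completeness of $\CWP(F)$ \cite{BFLaW}. Given a straight-line program $A$ over the standard generators $x_0,x_1$ of $F$, I would first build in polynomial time a finite presentation $P_A$ with $\defp(P_A)=2$. It should have two key properties.

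\begin{enumerate}[label=(\roman*)]
\item If $\val(A)=1$ in $F$, then $P_A$ is $2$-AC-trivial.
\item If $\val(A)\neq1$, then $F$ embeds in $G(P_A)$, so in particular $G(P_A)\neq1$.
\end{enumerate}

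The construction uses one fresh generator per gate of $A$. The gate relations $g=hk$ and $g=x_i^{\pm1}$ have bounded length, so $\val(A)$ is never expanded. A Miller/Tancer-type gadget then makes the output generator kill the whole group exactly when it is trivial. In the yes case, I would show via \Cref{lem:ambient-normal-closure} that the auxiliary relators lie in the normal closure of the others. This lets them be peeled off into the generator-equals-relator form $(x_1,\dots,x_n,1,1)$.

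\textbf{The main obstacle.} I expect this algebraic step to be the hardest, specifically designing the gadget so that both (i) and (ii) hold simultaneously. Two things must be checked.

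\begin{itemize}
\item The AC-triviality must use only the $F$-relations, which are available as normal-closure consequences. This requires carefully choosing the finite presentation of $F$ and the encoding of the gates.
\item The no-case embedding must be proved, most naturally as an iterated HNN/amalgam argument in which $F$ survives because $\val(A)$ is nontrivial.
\end{itemize}

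Next I would convert $P_A$ into a $4$-dimensional $2$-handlebody $N_A$: one $0$-handle, a dotted-circle $1$-handle per generator, and a $2$-handle per relator, all with polynomial-size Kirby diagram data. I would then triangulate the double $X_A=DN_A=\partial(N_A\times I)$ in polynomial time, which gives a closed PL $4$-manifold with $\pi_1(X_A)\cong G(P_A)$. In the yes case, AC-moves realise $3$-deformations of the spine. Since we are in dimension $\le 4$, the corresponding handle slides and cancellations act on $N_A\times I$, a $5$-dimensional thickening, where $3$-deformation implies PL homeomorphism of thickenings. Hence $N_A\times I\cong N\times I$, where $N$ is the thickening of the spine of $(x_1,\dots,x_n,1,1)$. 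That spine is $S^2\vee S^2$ after cancelling, so $N\cong\natural_2(S^2\times D^2)$ up to the framing choice. Taking boundaries gives $X_A\cong\partial(\natural_2 S^2\times D^3)\cong\cs_2(S^2\times S^2)$. Getting $S^2\times S^2$ rather than $S^2\tilde\times S^2$ needs the double, which is automatically even, and I would check this.

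Finally, I would fix a triangulation of $Z$ and form $\tri_{A,Z}$ by deleting a pentachoron from each piece and gluing, which adds only constant overhead.

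\begin{itemize}
\item \textbf{Yes case.} $X_{A,Z}\cong Z\cs_2(S^2\times S^2)$, because connected sum of connected PL manifolds is well defined, taking care with orientation choices; these are harmless since $S^2\times S^2$ admits an orientation-reversing self-map.
\item \textbf{No case.} $\pi_1(X_{A,Z})\cong G(P_A)*\pi_1(Z)$ by Seifert--van Kampen. Grushko--Neumann gives $d=d(G(P_A))+d(\pi_1 Z)>d(\pi_1 Z)$, since $G(P_A)\neq1$. This differs from the target's $\pi_1(Z)$, so $X_{A,Z}$ is not PL-homeomorphic to the target.
\end{itemize}

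Taking $Z=\cs_{n-2}(S^2\times S^2)$ (or $S^4$ when $n=2$) gives the corollary for $\cs_n(S^2\times S^2)$. NP-hardness then follows from $\mathrm{NP}\subseteq\mathrm{PSPACE}$.
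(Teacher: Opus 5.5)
Your pipeline is the paper's: a Tietze encoding of the gates of $A$, a Miller--Tancer gadget whose two trivialising words are $x_0,x_1$, the Kirby diagram and its double, a $5$-dimensional thickening argument, connected sum with $Z$, and Grushko--Neumann in the no-case. Two trivialising words are forced, since $F_{\mathrm{ab}}\cong\mathbb Z^2$. The algebra you leave as a plan is what the paper carries out: the no-case via an amalgamated product $G_L*_{\phi}G_R$ over free subgroups, and the yes-case via Lemma~\ref{lem:ambient-normal-closure}.

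The genuine gap is in the yes-case topology. A $3$-deformation does not by itself determine a $5$-dimensional thickening. Orientable $5$-dimensional thickenings of a $2$-complex $K$ are classified by a class in $H^2(K;\mathbb Z/2)$ pulling back to $w_2$. So spine transport only tells you that $N_A\times I\cong E_1\bs E_2$, where each $E_i$ is either $S^2\times D^3$ or the nontrivial $D^3$-bundle over $S^2$ (whose boundary is $S^2\widetilde{\times}S^2$). Your proposed resolution, that the double is automatically even, is false. Take the $2$-AC-trivial presentation $P=\langle x\mid x,1,1\rangle$ and give one empty-relator unknot framing $1$. Then $W\cong(\mathbb{CP}^2\setminus\mathring B^4)\bs(S^2\times D^2)$, and $D(W)\cong(\mathbb{CP}^2\cs\overline{\mathbb{CP}^2})\cs(S^2\times S^2)$ has odd intersection form. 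In general $w_2(D(W))$ restricts to $w_2(W)$, so the double is spin exactly when $W$ is. Since you never fixed the framings, your argument does not determine which manifold the yes-case produces.

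The missing step, which the paper supplies, is to give every relator $2$-handle coefficient $0$. This is the Seifert framing in $S^3$, not the blackboard framing, which is why the paper tracks writhes when bounding the Casali--Cristofori output. With this choice the product spin structure on the $1$-handlebody extends over every $2$-handle, so $w_2(W(P))=0$ and $N(P)=W(P)\times I$ is the thickening of $K(P)$ with class $0$. The transported collapse $N(P)\searrow S^2\vee S^2$ is an isomorphism on $H^2(-;\mathbb Z/2)$, so the class relative to the new spine is again $0$. Uniqueness of thickenings with a given class then identifies $N(P)$ with $\bs_2(S^2\times D^3)$; any even framings would work equally well.

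A smaller point concerns the connected sum. In the paper's generalised triangulations, faces of a single pentachoron may be identified with one another, for instance in a one-vertex triangulation. Deleting a pentachoron then need not leave a manifold with an embedded $S^3$ boundary. You should either subdivide first or, as the paper does, unglue one facet pairing and insert a fixed $S^3\times I$ block.
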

	
	\section{The algebraic input from Thompson's group \texorpdfstring{$F$}{F}}
	\label{sec:alg-input}
	We use Thompson's group $F$ in the standard balanced finite presentation
	\[
	F=\langle x_0,x_1\mid R_1,R_2\rangle,
	\]
	where
	\[
	R_1=[x_0x_1^{-1},x_0^{-1}x_1x_0],
	\qquad
	R_2=[x_0x_1^{-1},x_0^{-2}x_1x_0^2].
	\]
	
	The compressed word problem $\CWP(F)$ asks, given a straight-line program $A$
	over $x_0^{\pm1},x_1^{\pm1}$, whether the word $\val(A)$ represented by
	$A$ is equal to $1\in F$. Bartholdi--Figelius--Lohrey--Wei\ss{} have shown that
	$\CWP(F)$ is PSPACE-complete \cite{BFLaW}. Thus, to prove
	PSPACE-hardness of the recognition problem, it suffices to give a
	polynomial-time many-to-one reduction from $\CWP(F)$.
	
	\subsection{From a compressed word to a balanced presentation of \texorpdfstring{$F$}{F}}
	\label{sec:cw-to-pres}
	
	Let $A$ be a straight-line program over $x_0^{\pm1},x_1^{\pm1}$. Write the gates, in topological order, as $Y_1,\ldots,Y_N$, so that every multiplication gate uses only earlier gates. For each gate
	$Y_i$, introduce a generator $y_i$. If $Y_i=Y_jY_k$, add the relator
	\[
	d_i=y_i^{-1}y_jy_k.
	\]
	If $Y_i=x_\ell^\varepsilon$, where $\ell\in\{0,1\}$ and
	$\varepsilon\in\{-1,1\}$, add the relator
	\[
	d_i=y_i^{-1}x_\ell^\varepsilon.
	\]
	Let $y_{\mathrm{out}}$ denote the output generator, and set
	\[
	Q_A=
	\left\langle
	x_0,x_1,y_1,\dots,y_N
	\ \middle|\
	R_1,R_2,d_1,\dots,d_N
	\right\rangle .
	\]
	
	\begin{lemma}\label{lem:QA}
		The presentation $Q_A$ is a balanced Tietze extension of the standard
		presentation of $F$. Moreover,
		\[
		y_{\mathrm{out}}=1\text{ in }G(Q_A)
		\quad\Longleftrightarrow\quad
		\val(A)=1\text{ in }F.
		\]
		Finally,
		\[
		Q_A^+=
		\left\langle
		x_0,x_1,y_1,\dots,y_N
		\ \middle|\
		R_1,R_2,d_1,\dots,d_N,x_0,x_1
		\right\rangle
		\]
		is $2$-AC-trivial.
	\end{lemma}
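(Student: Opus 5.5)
We treat the three assertions in turn, working gate by gate in topological order throughout.

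\emph{Balanced Tietze extension.} Each new generator $y_i$ comes with exactly one new relator $d_i$, so $Q_A$ is balanced. Moreover, $d_i$ has the form $y_i^{-1}w_i$, where $w_i$ is a word in $x_0^{\pm1},x_1^{\pm1}$ and the earlier generators $y_j$ with $j<i$. We induct on $i$. Adding $y_i$ together with $d_i$ is an elementary Tietze move of the form ``add a generator together with a relator defining it in terms of the old generators.'' Hence $G(Q_A)\cong F$ via the map sending $y_i$ to the element represented by $\val(Y_i)$. The same induction shows that in $G(Q_A)$ we have $y_i=\val(Y_i)$ for every $i$. Multiplication gates give $y_i=y_jy_k=\val(Y_j)\val(Y_k)=\val(Y_i)$, and letter gates are immediate. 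Taking $i=\mathrm{out}$ and using the isomorphism with $F$ gives the stated equivalence.

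\emph{$2$-AC-triviality of $Q_A^+$.} The generating set has $N+2$ elements and there are $N+4$ relators, so we must reach $(x_0,x_1,y_1,\dots,y_N,1,1)$. The key tool is \Cref{lem:ambient-normal-closure}, applied with $b$'s among the entries $x_0,x_1$ and the already-normalised $y_j$.
\begin{enumerate}
\item Since $R_1,R_2$ are products of commutators of words in $x_0,x_1$, they lie in $\langle\!\langle x_0,x_1\rangle\!\rangle$. Applying the lemma with $U=R_\ell^{-1}$ turns each $R_\ell$ into the empty relator.
\item We process $i=1,\dots,N$ in order, assuming $d_j$ has already been replaced by $y_j^{-1}$ (equivalently $y_j$, after an inversion) for all $j<i$. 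We write $d_i=y_i^{-1}w_i$. The word $w_i$ lies in $\langle\!\langle x_0,x_1,y_j:j<i\rangle\!\rangle$, and all these are current entries distinct from $d_i$. Applying the lemma with $U=w_i^{-1}$ replaces $d_i$ by $y_i^{-1}w_iw_i^{-1}=y_i^{-1}$ in the free group, and we then invert.
\end{enumerate}
A final permutation yields $(x_0,x_1,y_1,\dots,y_N,1,1)$.

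There is no real obstacle here. The only point needing care is the ordering: the replacement of $d_i$ uses only entries that are already single generators, and the lemma changes just one entry at a time, so earlier entries stay fixed. Letter gates with $\varepsilon=-1$ are covered by the same argument, since $x_\ell^{-1}$ still lies in the normal closure of $x_\ell$.
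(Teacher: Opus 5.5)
Your proposal is correct and follows essentially the same route as the paper. You treat the $d_i$ as definitional Tietze relators for the first two claims, then apply Lemma~\ref{lem:ambient-normal-closure} repeatedly: first to empty $R_1,R_2$ using the entries $x_0,x_1$, then in topological gate order to reduce each $d_i$ to $y_i^{-1}$ and invert it.
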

	
	\begin{proof}
		Each relator $d_i$ is a definitional Tietze relator identifying $y_i$ with
		the word computed by the $i$-th gate. Therefore $Q_A$ presents $F$, and
		$y_{\mathrm{out}}$ represents $\val(A)$. It has $N+2$ generators and
		$N+2$ relators, so it is balanced.
		
		It remains to prove $2$-AC-triviality of $Q_A^+$. In the relator tuple
		\[
		(R_1,R_2,d_1,\dots,d_N,x_0,x_1),
		\]
		move $x_0,x_1$ to the front. Using Lemma~\ref{lem:ambient-normal-closure},
		delete all occurrences of $x_0^{\pm1}$ and $x_1^{\pm1}$ from the other
		relators. Thus $R_1$ and $R_2$ become empty relators.
		
		Now process the gate relators in increasing order. If
		$d_i=y_i^{-1}x_\ell^\varepsilon$, then after $x_\ell$ has been eliminated,
		$d_i$ becomes $y_i^{-1}$, which may then be inverted to $y_i$. If
		$d_i=y_i^{-1}y_jy_k$, then $j,k<i$, so $y_j$ and $y_k$ already occur as single-letter relators; another application of Lemma~\ref{lem:ambient-normal-closure}
		turns $d_i$ into $y_i^{-1}$, and then into $y_i$. Hence the relator tuple
		is AC-equivalent to
		\[
		(x_0,x_1,y_1,\dots,y_N,1,1),
		\]
		which is precisely $2$-AC-triviality.
	\end{proof}
	
	\section{The Miller--Tancer gadget}
	\label{sec:miller-tancer-gadgetry}
	
	Let
	\(
	P=\langle x_1,\dots,x_n\mid r_1,\dots,r_m\rangle
	\)
	be a finite presentation, let $w\in\Free(x_1,\dots,x_n)$, and let
	$S=(s_1,\dots,s_{m'})$ be a tuple of additional words in
	$\Free(x_1,\dots,x_n)$. Write
	\[
	P^+=\langle x_1,\dots,x_n\mid
	r_1,\dots,r_m,s_1,\dots,s_{m'}\rangle
	\]
	for the presentation obtained by adjoining them.
	
	The following gadget is a modification of the constructions of Miller and
	Tancer \cite{Miller1992,Tancer}. We retain Miller's commutator in the third
	relation and use symmetric conjugating powers in the fourth family. 
	These choices allow the unrestricted embedding argument given below.
	Define $M(P,w,S)$ to have generators
	\[
	x_1,\dots,x_n,\alpha,\beta,\gamma,
	\]
	the relators $r_1,\dots,r_m$, and the following additional relators:
	\[
	\alpha^{-1}\beta\alpha
	=
	\gamma^{-1}\beta^{-1}\gamma\beta\gamma, \tag{MT1}
	\]
	\[
	\alpha^{-2}\beta^{-1}\alpha\beta\alpha^{2}
	=
	\gamma^{-2}\beta^{-1}\gamma\beta\gamma^{2}, \tag{MT2}
	\]
	\[
	\alpha^{-3}[w,\beta]\alpha^{3}
	=
	\gamma^{-3}\beta\gamma^{3}, \tag{MT3}
	\]
	and, for $i=1,\dots,m'$,
	\[
	\alpha^{-(i+3)}s_i\beta\alpha^{i+3}
	=
	\gamma^{-(i+3)}\beta\gamma^{i+3}. \tag{MT4}
	\]
	Each equation contributes its left-hand side times the inverse of its
	right-hand side as a relator. The presentation has $n+3$ generators and
	$m+m'+3$ relators, and therefore
	\[
	\defp(M(P,w,S))=m+m'-n.
	\]
	
	\begin{lemma}
		\label{lem:branch-normal-form}
		Let $A$ be a group and $H=A*\langle t\rangle$. Let
		$D\subset\mathbb Z$ be finite. For each $d\in D$, choose exactly one
		element $z_d$ of one of the two forms
		\[
		z_d=t^{-d}a_dt^d,
		\qquad a_d\in A\text{ of infinite order}, \tag{P$_d$}
		\]
		or
		\[
		z_d=t^{-d}b_d^{-1}tb_dt^d,
		\qquad b_d\in A\setminus\{1\}. \tag{Q$_d$}
		\]
		Then $\{z_d:d\in D\}$ is a free basis for the subgroup it generates in $H$.
	\end{lemma}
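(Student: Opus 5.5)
The plan is to use the normal form theorem for the free product $H=A*\langle t\rangle$. We show directly that every nontrivial freely reduced word in the $z_d$ represents a nontrivial element of $H$. This is exactly the statement that $\{z_d:d\in D\}$ freely generates the subgroup it generates.

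Take a nonempty reduced word $z_{d_1}^{e_1}z_{d_2}^{e_2}\cdots z_{d_k}^{e_k}$ with $e_j\neq0$ and $d_j\neq d_{j+1}$; adjacent equal indices have already been merged. First compute each syllable explicitly:
\[
z_d^{e}=t^{-d}a_d^{e}t^{d}\quad(\text{P}_d),\qquad
z_d^{e}=t^{-d}b_d^{-1}t^{e}b_dt^{d}\quad(\text{Q}_d).
\]
In case (P$_d$), $a_d^e\neq1$ because $a_d$ has infinite order. In case (Q$_d$), the middle $b_d^{-1}t^eb_d$ is an alternating string of nontrivial $A$-letters and a nonzero $t$-power, because $b_d\neq1$ and $e\neq0$. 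So each syllable has the shape $t^{-d}\,(\text{nonempty reduced alternating word beginning and ending in }A\setminus\{1\})\,t^{d}$.

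Next, concatenate the syllables. At the junction between syllable $j$ and syllable $j+1$, the trailing $t^{d_j}$ and the leading $t^{-d_{j+1}}$ combine to $t^{d_j-d_{j+1}}$. This power is nonzero because $d_j\neq d_{j+1}$. It sits between an $A$-letter $\neq1$ ending syllable $j$ and an $A$-letter $\neq1$ starting syllable $j+1$. Hence the whole product is
\[
t^{-d_1}\,g_1\,t^{m_1}\,g_2\cdots g_r\,t^{d_k},
\]
where every $g_i\in A\setminus\{1\}$ and every interior exponent $m_i$ is nonzero. Only the two outer exponents may vanish, and we simply drop them if they do. This is a reduced alternating word in the free product with at least one nontrivial $A$-letter. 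By the normal form theorem it is not the identity. Therefore no nontrivial reduced word in the $z_d$ is trivial, and the $z_d$ form a free basis.

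The only point needing care is the bookkeeping at the junctions: that no cancellation can propagate across syllables. This is exactly where the hypotheses enter. Distinct $d$'s make the junction powers nonzero. Infinite order of $a_d$, together with $b_d\neq1$, keeps the $A$-letters nontrivial, which blocks any collapse inside a syllable. Once that is checked, the conclusion is immediate.
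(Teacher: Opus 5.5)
Your proposal is correct and follows the paper's proof. Like the paper, you write each syllable as $z_{d_j}^{k_j}=t^{-d_j}c_jt^{d_j}$ with $c_j$ beginning and ending in a nontrivial $A$-letter, note that the interior junction powers $t^{d_j-d_{j+1}}$ are nonzero, and conclude by the normal form theorem for $A*\langle t\rangle$; your remark about dropping vanishing outer exponents is a small detail the paper leaves implicit.
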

	
	\begin{proof}
		Consider a nonempty reduced word in the displayed generators, written after
		collecting adjacent powers as
		\[
		z_{d_1}^{k_1}\cdots z_{d_r}^{k_r},
		\qquad
		k_j\ne0,\quad d_j\ne d_{j+1}.
		\]
		In case $(\mathrm P_d)$,
		\[
		z_d^k=t^{-d}a_d^kt^d,
		\]
		where $a_d^k\ne1$. In case $(\mathrm Q_d)$,
		\[
		z_d^k=t^{-d}b_d^{-1}t^kb_dt^d.
		\]
		Thus, for each $j$, there is a nontrivial reduced word $c_j$ which
		begins and ends with a nontrivial $A$-syllable such that
		\(
		z_{d_j}^{k_j}=t^{-d_j}c_jt^{d_j}.
		\)
		Expanding the product gives
		\(
		t^{-d_1}c_1t^{d_1-d_2}c_2
		\cdots
		t^{d_{r-1}-d_r}c_rt^{d_r}.
		\)
		Every internal power $t^{d_j-d_{j+1}}$ is nontrivial. Hence this is a
		nonempty reduced normal form in $A*\langle t\rangle$. No nonempty reduced
		word in the $z_d$'s is trivial, proving the assertion.
	\end{proof}
	
	\begin{theorem}
		\label{thm:miller-tancer}
		For arbitrary $P,w,S$, the following hold.
		\begin{enumerate}[label=\textup{(\roman*)}]
			\item If $w=1$ in $G(P)$, then the map induced by the old generators is
			an isomorphism
			\[
			G(P^+)\xrightarrow{\ \cong\ }G(M(P,w,S)).
			\]
			\item If $w\ne1$ in $G(P)$, then the natural homomorphism
			\[
			G(P)\longrightarrow G(M(P,w,S))
			\]
			is injective.
		\end{enumerate}
		Consequently, if $P^+$ presents the trivial group, then
		\[
		M(P,w,S)\text{ presents the trivial group}
		\quad\Longleftrightarrow\quad
		w=1\text{ in }G(P).
		\]
	\end{theorem}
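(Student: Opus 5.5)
The plan is to handle part (i) by a direct computation in $G(M(P,w,S))$, and part (ii) by writing $G(M(P,w,S))$ as an amalgamated free product whose factor subgroups are shown to be free using Lemma~\ref{lem:branch-normal-form}. The final ``consequently'' clause then follows by combining (i) and (ii).

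\emph{Part (i).} Assume $w=1$ in $G(P)$. In $G(M)$ the commutator $[w,\beta]$ is then trivial, so (MT3) gives $\gamma^{-3}\beta\gamma^{3}=1$, hence $\beta=1$.
\begin{itemize}[leftmargin=*]
\item With $\beta=1$, (MT1) reads $1=\gamma^{-1}\gamma\gamma=\gamma$, so $\gamma=1$.
\item (MT2) then reads $\alpha^{-2}\alpha\alpha^{2}=1$, i.e.\ $\alpha=1$.
\item Each (MT4) reduces to $s_i=1$.
\end{itemize}
Hence the map $G(P^+)\to G(M)$ induced by $x_j\mapsto x_j$ is well defined and surjective. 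For the inverse, define $G(M)\to G(P^+)$ by $x_j\mapsto x_j$ and $\alpha,\beta,\gamma\mapsto 1$. It is well defined because (MT1)--(MT3) become trivial, and (MT4) becomes $s_i=1$, which holds in $G(P^+)$. The two composites are the identity on generators, so the map is an isomorphism.

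\emph{Part (ii).} Assume $w\neq1$ in $G(P)$.
\begin{itemize}[leftmargin=*]
\item Set $K=G(P)*\langle\beta\rangle$ and $L=K*\langle\alpha\rangle$.
\item Let $J=\langle\beta\rangle*\langle\gamma\rangle$, a free group on $\beta,\gamma$, disjoint from $L$.
\item In $L$, take $u_0=\beta$ and let $u_1,u_2,u_3,u_{i+3}$ be the left-hand sides of (MT1)--(MT4).
\item In $J$, take $v_0=\beta$ and let $v_1,v_2,v_3,v_{i+3}$ be the corresponding right-hand sides.
\end{itemize}
Then $G(M)$ is exactly the quotient of $L*J$ identifying $u_d$ with $v_d$ for all $d$. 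This identifies the two copies of $\beta$ and imposes the relations (MT1)--(MT4).

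If $\{u_d\}$ and $\{v_d\}$ are both free bases, of the same rank $m'+4$, for the subgroups they generate, then $G(M)\cong L*_{U=V}J$ is an honest amalgamated free product. Its factors embed, so $G(P)\le K\le L$ injects into $G(M)$.

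To obtain the free bases, apply Lemma~\ref{lem:branch-normal-form} with $D=\{0,1,\dots,m'+3\}$ twice.

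\emph{In $L$}, take $A=K$ and $t=\alpha$.
\begin{itemize}[leftmargin=*]
\item $u_0=\beta$ and $u_1$ are of form (P) with $a=\beta$, which has infinite order.
\item $u_2=\alpha^{-2}\beta^{-1}\alpha\beta\alpha^2$ is of form (Q) with $b=\beta\neq1$.
\item $u_3$ is of form (P) with $a=[w,\beta]$.
\item $u_{i+3}$ is of form (P) with $a=s_i\beta$.
\end{itemize}

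\emph{In $J$}, take $A=\langle\beta\rangle$ and $t=\gamma$.
\begin{itemize}[leftmargin=*]
\item $v_0=\beta$ is of form (P).
\item $v_1=\gamma^{-1}\beta^{-1}\gamma\beta\gamma$ and $v_2$ are of form (Q) with $b=\beta$.
\item $v_3$ and $v_{i+3}$ are of form (P) with $a=\beta$.
\end{itemize}

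The step needing care is the infinite-order hypotheses in $K=G(P)*\langle\beta\rangle$.
\begin{itemize}[leftmargin=*]
\item Since $w\ne1$ in $G(P)$, the element $[w,\beta]=w^{-1}\beta^{-1}w\beta$ is cyclically reduced of syllable length $4$ in the free product. Hence it has infinite order, since torsion in a free product is conjugate into a factor.
\item If $s_i\neq1$ in $G(P)$, then $s_i\beta$ is cyclically reduced of length $2$, hence of infinite order.
\item If $s_i=1$ in $G(P)$, then $s_i\beta=\beta$, which also has infinite order.
\end{itemize}
This is exactly where the hypothesis $w\ne1$ is used. Note also that the lemma only requires one element per exponent $d$, and the same $a$ may recur for different $d$. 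The case $d=0$ is what lets us absorb the shared generator $\beta$ into the amalgamated subgroup.

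\emph{Consequence.} If $P^+$ presents the trivial group and $w=1$, then (i) shows $G(M)$ is trivial. If $w\ne1$, then $G(P)$ is nontrivial, since $w\neq1$ in it, and it embeds in $G(M)$ by (ii), so $G(M)$ is nontrivial.
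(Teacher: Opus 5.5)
Your proposal is correct and follows the paper's proof essentially step for step. For (i) you use the same chain $\beta=1\Rightarrow\gamma=1\Rightarrow\alpha=1\Rightarrow s_i=1$ and the same inverse map killing $\alpha,\beta,\gamma$; for (ii) you use the same amalgamated free product $\bigl((G(P)*\langle\beta\rangle)*\langle\alpha\rangle\bigr)*_{\phi}\bigl(\langle\beta\rangle*\langle\gamma\rangle\bigr)$, with both bases certified free by Lemma~\ref{lem:branch-normal-form} and the same infinite-order checks for $[w,\beta]$ and $s_i\beta$. The only cosmetic difference is that you write a single $\beta$ for the two copies, where the paper writes $\beta_L,\beta_R$.
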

	
	\begin{proof}
		Suppose first that $w=1$ in $G(P)$. Then
		$[w,\beta]=1$ after imposing the relators of $P$, so (MT3) gives
		$\gamma^{-3}\beta\gamma^3=1$ and hence $\beta=1$. Relation (MT1)
		then gives $\gamma=1$, relation (MT2) gives $\alpha=1$, and
		$(\mathrm{MT4})_i$ gives $s_i=1$ for every $i$. Hence the old
		generators induce a homomorphism $G(P^+)\to G(M(P,w,S))$. Conversely,
		setting $\alpha=\beta=\gamma=1$ defines a homomorphism
		$G(M(P,w,S))\to G(P^+)$. The two maps are mutually inverse, proving (i).
		
		Now suppose that $w\ne1$ in $G(P)$. Put
		\[
		G_L=\bigl(G(P)*\langle\beta_L\rangle\bigr)*\langle\alpha\rangle,
		\qquad
		G_R=\langle\beta_R\rangle*\langle\gamma\rangle .
		\]
		In $G_L$, define
		\[
		\begin{aligned}
			u_0&=\beta_L,\\
			u_1&=\alpha^{-1}\beta_L\alpha,\\
			u_2&=\alpha^{-2}\beta_L^{-1}\alpha\beta_L\alpha^2,\\
			u_3&=\alpha^{-3}[w,\beta_L]\alpha^3,\\
			u_{3+i}&=\alpha^{-(i+3)}s_i\beta_L\alpha^{i+3},
			\qquad i=1,\dots,m'.
		\end{aligned}
		\]
		Apply Lemma~\ref{lem:branch-normal-form} with
		\[
		A=G(P)*\langle\beta_L\rangle,\qquad t=\alpha,
		\]
		and with the indices
		\[
		D=\{0,1,\ldots,m'+3\}.
		\]
		The elements $u_0,u_1,u_2,u_3$, and $u_{3+i}$ correspond respectively to the distinct indices $0,1,2,3$, and $i+3$. The element $u_2$ has form (Q$_2$), with $b_2=\beta_L$, while all the other displayed elements have form~(P$_d$) for their corresponding indices $d$.
		
		The element $\beta_L$ has infinite order. Since $w\ne1$, $[w,\beta_L]=w^{-1}\beta_L^{-1}w\beta_L$
		is cyclically reduced of syllable length four in
		$G(P)*\langle\beta_L\rangle$,
		and therefore has infinite order. 
		
		For each $i$, if $s_i=1$ in $G(P)$, then $s_i\beta_L=\beta_L$; otherwise, $s_i\beta_L$ is cyclically reduced of syllable length two. Thus $s_i\beta_L$ has infinite order in either case. All the hypotheses of Lemma~\ref{lem:branch-normal-form} are therefore satisfied, and it follows that
		\[
		u_0,u_1,\ldots,u_{m'+3}
		\]
		freely generate a free subgroup $A_L\le G_L$.
		
		In $G_R$, define
		\[
		\begin{aligned}
			v_0&=\beta_R,\\
			v_1&=\gamma^{-1}\beta_R^{-1}\gamma\beta_R\gamma,\\
			v_2&=\gamma^{-2}\beta_R^{-1}\gamma\beta_R\gamma^2,\\
			v_3&=\gamma^{-3}\beta_R\gamma^3,\\
			v_{3+i}&=\gamma^{-(i+3)}\beta_R\gamma^{i+3},
			\qquad i=1,\ldots,m'.
		\end{aligned}
		\]
		Apply Lemma~\ref{lem:branch-normal-form} with
		\[
		A=\langle\beta_R\rangle,\qquad t=\gamma,
		\]
		and the same index set $D=\{0,1,\ldots,m'+3\}$. The displayed elements have forms
		\[
		(\mathrm{P}_0),\ (\mathrm{Q}_1),\ (\mathrm{Q}_2),\ (\mathrm{P}_3),\ (\mathrm{P}_{i+3}),
		\]
		respectively, and their associated indices are pairwise distinct. Every coefficient required in Lemma~\ref{lem:branch-normal-form} is nontrivial and, where required, has infinite order. Hence
		$
		v_0,v_1,\ldots,v_{m'+3}
		$
		freely generate a free subgroup $A_R\le G_R$.
		
		The basis correspondence $u_j\mapsto v_j$ defines an isomorphism
		\(
		\phi\colon A_L\longrightarrow A_R.
		\)
		After the relation $u_0=v_0$ identifies $\beta_L$ with $\beta_R$, the
		presentation of the amalgamated free product
		$
		G_L*_{\phi}G_R
		$
		is precisely $M(P,w,S)$. The normal-form theorem for amalgamated free
		products gives injections of $G_L$ and $G_R$ into this group. Since
		$G(P)$ is a free factor of $G_L$, the natural map
		$
		G(P)\hookrightarrow G(M(P,w,S))
		$
		is injective, proving (ii). If $P^+$ presents the trivial group, (i) gives the yes-case,
		whereas (ii) gives a nontrivial subgroup in every no-case. This proves the
		final equivalence.
	\end{proof}
	The preceding theorem controls the no-case. The following relative
	Andrews--Curtis calculation supplies the yes-case control needed in
	the PL reduction.
	
	\begin{lemma}
		\label{lem:relative-AC}
		Let $P=\langle X\mid R\rangle$, let $S=(s_1,\dots,s_{m'})$, $R=(r_1,\ldots,r_m)$, and suppose
		that $w=1$ in $G(P)$. Then the relator tuple
		of $M(P,w,S)$ is Andrews--Curtis equivalent to
		\[
		(R,s_1,\dots,s_{m'},\alpha,\beta,\gamma).
		\]
	\end{lemma}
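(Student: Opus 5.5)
The plan is to follow the proof of Theorem~\ref{thm:miller-tancer}(i), but at the level of relator tuples, using Lemma~\ref{lem:ambient-normal-closure} at each step. The key fact is that $w=1$ in $G(P)$ means exactly $w\in\langle\!\langle R\rangle\!\rangle_{\Free(X)}$. So any occurrence of $w$ inside a relator can be deleted modulo the normal closure of the entries of $R$. Throughout, the entries $r_1,\dots,r_m$ are never changed; they serve only as the $b$'s in Lemma~\ref{lem:ambient-normal-closure}.

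\emph{Step 1: extract $\beta$.} Write the (MT3) relator as $\alpha^{-3}[w,\beta]\alpha^{3}\gamma^{-3}\beta^{-1}\gamma^{3}$. Since $[w,\beta]=w^{-1}\beta^{-1}w\beta$, we have
\[
\alpha^{-3}[w,\beta]\alpha^3=\alpha^{-3}\beta^{-1}\beta\alpha^3\cdot U'
\quad\text{with } U'\in\langle\!\langle R\rangle\!\rangle.
\]
Concretely, $w^{-1}\beta^{-1}w\beta=(w^{-1})\cdot(\beta^{-1}w\beta)$ is a product of conjugates of $w^{\pm1}$. Expressing the correction as right multiplication by an element of $\langle\!\langle R\rangle\!\rangle$ (normal closures are normal, so left and right forms interchange by conjugation), Lemma~\ref{lem:ambient-normal-closure} turns the (MT3) relator into $\gamma^{-3}\beta^{-1}\gamma^{3}$. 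A conjugation by $\gamma^{-3}$ and an inversion then make it $\beta$.

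\emph{Step 2: clear the other relators.} With the entry $\beta$ now present, Lemma~\ref{lem:ambient-normal-closure} lets us delete every occurrence of $\beta^{\pm1}$ from the remaining relators. Again the correction lies in $\langle\!\langle\beta\rangle\!\rangle$, because deleting a letter $\beta^{\pm1}$ from a word changes it by right multiplication by a conjugate of $\beta^{\mp1}$. The relators change as follows.
\begin{itemize}
\item (MT1) becomes $\gamma^{-1}\gamma^{-1}\gamma\gamma$ up to free reduction, so that it reads $\alpha^{-1}\alpha\cdot\gamma^{-1}\gamma^{-1}\gamma\gamma$. I need to redo this: the relator is $\alpha^{-1}\beta\alpha\,(\gamma^{-1}\beta^{-1}\gamma\beta\gamma)^{-1}=\alpha^{-1}\beta\alpha\gamma^{-1}\beta^{-1}\gamma^{-1}\beta\gamma$. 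Deleting the $\beta$'s gives $\gamma^{-1}$, which is inverted to $\gamma$.
\item (MT2) then becomes $\alpha^{-2}\alpha\alpha^{2}\gamma^{-2}\gamma^{-1}\gamma^{2}=\alpha\gamma^{-1}$. Using the entry $\gamma$, it becomes $\alpha$.
\item Each (MT4)$_i$ becomes $\alpha^{-(i+3)}s_i\alpha^{i+3}\gamma^{-(i+3)}\gamma^{i+3}=\alpha^{-(i+3)}s_i\alpha^{i+3}$. Conjugating by $\alpha^{i+3}$ gives $s_i$.
\end{itemize}
The step about $\alpha$ can be done either by multiplying by $\gamma$ or by deleting $\gamma$ via the lemma; either works.

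\emph{Step 3.} A final permutation gives $(R,s_1,\dots,s_{m'},\alpha,\beta,\gamma)$.

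The only delicate point is the order of operations. Each deletion must use an entry that is already in its final form, and $\beta$ must be obtained before it is used to clean (MT1), (MT2), and (MT4). Step~1 is the main obstacle. There I must check carefully that removing $w$ from $[w,\beta]$ sitting inside a larger word really amounts to multiplying that word on the right by an element of $\langle\!\langle R\rangle\!\rangle$. It does: if $V=V_1wV_2$, then $V_1V_2=V\cdot (V_2^{-1}w^{-1}V_2)$, so each deletion of $w^{\pm1}$ is right multiplication by a conjugate of $w^{\mp1}$, which lies in $\langle\!\langle R\rangle\!\rangle$.
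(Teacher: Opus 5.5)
Your proof is correct and follows essentially the same route as the paper. You use $w\in\langle\!\langle R\rangle\!\rangle$ to reduce (MT3) to $\gamma^{-3}\beta^{-1}\gamma^{3}$ and hence to $\beta$, then strip $\beta$ (and $\gamma$) from (MT1), (MT2), (MT4) to get $\gamma$, $\alpha$, $s_i$; the paper just packages your letter-by-letter deletion of $w^{\pm1}$ into the single correction $U=C\alpha^{-3}d^{-1}\alpha^{3}C^{-1}$, with $d=[w,\beta]$ and $C=\gamma^{-3}\beta\gamma^{3}$. You should remove the abandoned false start in your (MT1) bullet, since only the corrected computation giving $\gamma^{-1}$ is right.
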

	
	\begin{proof}
		Denote the relators corresponding to (MT1), (MT2), (MT3), and
		$(\mathrm{MT4})_i$ by $b_1,b_2,b_3,c_i$. Put
		\[
		d=[w,\beta],
		\qquad
		C=\gamma^{-3}\beta\gamma^3.
		\]
		Thus
		\[
		\begin{aligned}
			b_1&=\alpha^{-1}\beta\alpha
			\left(\gamma^{-1}\beta^{-1}\gamma\beta\gamma\right)^{-1},\\
			b_2&=\alpha^{-2}\beta^{-1}\alpha\beta\alpha^2
			\left(\gamma^{-2}\beta^{-1}\gamma\beta\gamma^2\right)^{-1},\\
			b_3&=\alpha^{-3}d\alpha^3C^{-1},\\
			c_i&=\alpha^{-(i+3)}s_i\beta\alpha^{i+3}
			\left(\gamma^{-(i+3)}\beta\gamma^{i+3}\right)^{-1}.
		\end{aligned}
		\]
		
		Since $w=1$ in $G(P)$, the word $w$ lies in the normal closure of $R$
		inside $\Free(X)$. Therefore
		\[
		d=w^{-1}\beta^{-1}w\beta
		\]
		and $d^{-1}$ lie in the normal closure of $R$ inside
		$\Free(X,\alpha,\beta,\gamma)$. The same is true of
		\[
		U:=C\alpha^{-3}d^{-1}\alpha^3C^{-1}.
		\]
		By Lemma~\ref{lem:ambient-normal-closure}, using only the entries of $R$, replace
		$b_3$ by $b_3U$. Then
		\[
		\begin{aligned}
			b_3U
			&=\alpha^{-3}d\alpha^3C^{-1}
			C\alpha^{-3}d^{-1}\alpha^3C^{-1}\\
			&=\alpha^{-3}dd^{-1}\alpha^3C^{-1}
			=C^{-1}.
		\end{aligned}
		\]
		After inversion and conjugation by $\gamma^3$, this entry is $\beta$.
		
		Using the relator $\beta$, delete every occurrence of
		$\beta^{\pm1}$ from $b_1$. The result is
		\[
		\alpha^{-1}\alpha(\gamma^{-1}\gamma\gamma)^{-1}=\gamma^{-1},
		\]
		which gives the relator $\gamma$. Using $\beta$ and $\gamma$, delete
		their occurrences from $b_2$; the result is
		\[
		\alpha^{-2}\alpha\alpha^2=\alpha.
		\]
		Finally, using $\alpha,\beta,\gamma$, delete their occurrences from each
		$c_i$, leaving $s_i$. Every deletion is an application of
		Lemma~\ref{lem:ambient-normal-closure}. The resulting tuple is, up to
		permutation and inversion,
		\[
		(R,s_1,\dots,s_{m'},\alpha,\beta,\gamma),
		\]
		as required.
	\end{proof}
	
	\begin{lemma}
		\label{lem:AC-control}
		Let $P=\langle X\mid R\rangle$ be a balanced presentation on $n$
		generators, let $S=(s_1,s_2)$, and suppose that
		\[
		P^+=\langle X\mid R,s_1,s_2\rangle
		\]
		is $2$-AC-trivial. If $w=1$ in $G(P)$, then $M(P,w,S)$ is
		$2$-AC-trivial.
	\end{lemma}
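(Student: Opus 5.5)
The proof combines Lemma~\ref{lem:relative-AC} with the $2$-AC-triviality of $P^+$. The only point needing care is that the latter is a statement about the free group $\Free(X)$, while we work in $\Free(X,\alpha,\beta,\gamma)$.

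First, count. $P$ is balanced on $n$ generators, so $R=(r_1,\dots,r_n)$. Hence $M(P,w,S)$ has $n+3$ generators and $n+2+3=n+5$ relators. Thus $2$-AC-triviality means AC-equivalence to
\[
(x_1,\dots,x_n,\alpha,\beta,\gamma,1,1).
\]
Since $w=1$ in $G(P)$, Lemma~\ref{lem:relative-AC} shows that the relator tuple of $M(P,w,S)$ is AC-equivalent to
\[
(r_1,\dots,r_n,s_1,s_2,\alpha,\beta,\gamma).
\]

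Second, use the hypothesis on $P^+$. It gives a finite sequence of AC moves in $\Free(X)$ taking $(r_1,\dots,r_n,s_1,s_2)$ to $(x_1,\dots,x_n,1,1)$. Each such move is also an AC move in the larger free group $\Free(X,\alpha,\beta,\gamma)$, because $\Free(X)$ is a subgroup of it. Each move also leaves the three extra entries $\alpha,\beta,\gamma$ untouched: inversions, multiplications and conjugations only ever involve entries among the first $n+2$. So we may apply the same sequence to the first $n+2$ entries of the enlarged tuple and obtain
\[
(x_1,\dots,x_n,1,1,\alpha,\beta,\gamma).
\]
A permutation of relators then yields the normal form above. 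No further moves are needed, since permutations are allowed.

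The main (mild) obstacle is this embedding step: checking that an AC-equivalence in $\Free(X)$ remains valid in the bigger free group in the presence of the extra entries. This holds because every AC move is defined purely in terms of the participating relators and an arbitrary conjugator, and both remain admissible in the larger group. Everything else is bookkeeping of relator counts and the chaining of AC-equivalences, which is an equivalence relation.
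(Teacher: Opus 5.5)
Your proposal is correct and follows the paper's proof: you invoke Lemma~\ref{lem:relative-AC} to reach $(R,s_1,s_2,\alpha,\beta,\gamma)$, run the AC moves witnessing $2$-AC-triviality of $P^+$ on the first $n+2$ entries while leaving $\alpha,\beta,\gamma$ fixed, and then permute. The paper leaves implicit your observation that AC moves in $\Free(X)$ remain valid AC moves in $\Free(X,\alpha,\beta,\gamma)$.
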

	
	\begin{proof}
		By Lemma~\ref{lem:relative-AC}, the full relator tuple is AC-equivalent to
		\[
		(R,s_1,s_2,\alpha,\beta,\gamma).
		\]
		Perform the assumed AC moves on the first $n+2$ entries, leaving the last
		three fixed. The tuple becomes
		\[
		(x_1,\dots,x_n,1,1,\alpha,\beta,\gamma),
		\]
		and hence, after permutation,
		\[
		(x_1,\dots,x_n,\alpha,\beta,\gamma,1,1).
		\]
		The modified gadget has $n+3$ generators and $n+5$ relators in this case,
		so this is exactly $2$-AC-triviality.
	\end{proof}
	
	\section{The final presentation associated to a compressed word}
	\label{sec:final-presentation}
	
	Apply the modified Miller--Tancer construction to
	\[
	P=Q_A,
	\qquad
	w=y_{\mathrm{out}},
	\qquad
	S=(x_0,x_1),
	\]
	and define
	\[
	P_A:=M(Q_A,y_{\mathrm{out}},(x_0,x_1)).
	\]
	
	\begin{lemma}\label{lem:PA}
		The presentation $P_A$ has $\defp(P_A)=2$. Moreover, (i) if $\val(A)=1$ in $F$, then $P_A$ is $2$-AC-trivial;
		and (ii) if $\val(A)\ne1$ in $F$, then the natural map
		\(
		F\cong G(Q_A)\longrightarrow G(P_A)
		\)
		is injective.
	\end{lemma}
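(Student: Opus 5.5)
The statement is essentially a bookkeeping corollary of the results of \Cref{sec:alg-input,sec:miller-tancer-gadgetry}, so the plan is to instantiate them with $P=Q_A$, $w=y_{\mathrm{out}}$ and $S=(x_0,x_1)$, so that $m'=2$.

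\textbf{Surplus.} First I would compute $\defp(P_A)$. By \Cref{lem:QA}, $Q_A$ is balanced, with $n=N+2$ generators and $m=N+2$ relators. The formula $\defp(M(P,w,S))=m+m'-n$ recorded after the definition of the gadget then gives $\defp(P_A)=(N+2)+2-(N+2)=2$.

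\textbf{Yes-case (i).} Suppose $\val(A)=1$ in $F$. By \Cref{lem:QA}, this means $y_{\mathrm{out}}=1$ in $G(Q_A)$, so the hypothesis $w=1$ in $G(P)$ holds. The same lemma states that
\[
Q_A^+=\langle x_0,x_1,y_1,\dots,y_N\mid R_1,R_2,d_1,\dots,d_N,x_0,x_1\rangle
\]
is $2$-AC-trivial, and this is exactly the presentation $P^+$ obtained by adjoining $S=(x_0,x_1)$ to $Q_A$. Since $Q_A$ is balanced, all hypotheses of \Cref{lem:AC-control} are satisfied, and that lemma gives that $P_A=M(Q_A,y_{\mathrm{out}},(x_0,x_1))$ is $2$-AC-trivial.

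\textbf{No-case (ii).} Suppose $\val(A)\ne1$ in $F$. Then $y_{\mathrm{out}}\ne1$ in $G(Q_A)$ by \Cref{lem:QA}. Part (ii) of \Cref{thm:miller-tancer} therefore gives injectivity of the natural map $G(Q_A)\to G(P_A)$. It remains to identify $G(Q_A)$ with $F$. Since $Q_A$ is a Tietze extension of the standard presentation, the map induced by $x_0,x_1$ is an isomorphism $F\cong G(Q_A)$. Composing this isomorphism with the injection gives the claimed injection $F\to G(P_A)$.

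\textbf{Expected difficulty.} I expect no real obstacle. The only point requiring care is checking that the tuple ordering and the meaning of ``$P^+$'' in \Cref{lem:AC-control} match $Q_A^+$ exactly, namely that the two extra relators are $s_1=x_0$ and $s_2=x_1$ and that the surplus count is $2$. Both hold by construction.
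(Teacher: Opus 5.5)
Your proposal is correct and matches the paper's proof essentially step for step. The surplus comes from the gadget count $m+m'-n$ with $Q_A$ balanced and $m'=2$. The yes-case combines the $2$-AC-triviality of $Q_A^+$ from Lemma~\ref{lem:QA} with Lemma~\ref{lem:AC-control}, and the no-case is Theorem~\ref{thm:miller-tancer}(ii) applied with $w=y_{\mathrm{out}}\neq1$ in $G(Q_A)\cong F$.
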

	
	\begin{proof}
		By Lemma~\ref{lem:QA}, $Q_A$ is balanced. If it has $n$ generators and
		$m$ relators, then $m=n$. The Miller--Tancer construction with two
		additional trivialising words has surplus
		\[
		m+2-n=2.
		\]
		
		If $\val(A)=1$ in $F$, then $y_{\mathrm{out}}=1$ in $G(Q_A)$. By
		Lemma~\ref{lem:QA}, $Q_A^+$ is
		$2$-AC-trivial. Lemma~\ref{lem:AC-control} implies that $P_A$ is
		$2$-AC-trivial.
		
		If $\val(A)\neq1$ in $F$, then $y_{\mathrm{out}}\neq1$ in $G(Q_A)$. By
		Theorem~\ref{thm:miller-tancer}, the natural map
		$F\cong G(Q_A)\to G(P_A)$ is injective.
	\end{proof}
	
	The construction $A\mapsto P_A$ is polynomial-time: each straight-line program
	gate adds one generator and one relator of length at most $3$, and the
	modified gadget adds three generators and five relators whose lengths are
	polynomial in the size of $A$. The exponentially long word $\val(A)$ is
	never expanded.
	
	\section{From a presentation to a triangulated closed
		\texorpdfstring{$4$}{4}-manifold}
	\label{sec:triangulation}
	
	We record the explicit polynomial-time passage from a finite presentation to
	the triangulated closed $4$-manifold used below. The triangulation step uses
	the algorithm of Casali--Cristofori \cite{CasaliCristofori} for converting Kirby diagrams into
	coloured triangulations.
	
	Let
	\[
	P=\langle z_1,\dots,z_n\mid \rho_1,\dots,\rho_r\rangle,
	\qquad r=n+k,
	\]
	be a finite presentation with $\defp(P)=k$, and set
	\[
	\|P\|:=n+r+\sum_{j=1}^{r}|\rho_j|,
	\]
	where $|\rho_j|$ denotes the length of the displayed input word.
	Thus every generator and every relator, including an empty relator,
	contributes to the input size.
	
	For a finite presentation $P$ as above,
	let $K(P)$ denote its standard presentation complex, with one vertex,
	one oriented $1$-cell labelled $z_i$ for each generator, and one
	$2$-cell attached along the loop representing $\rho_j$ for each
	relator. When the presentation is displayed explicitly, we also write
	\[
	K\langle z_1,\dots,z_n\mid \rho_1,\dots,\rho_r\rangle
	\]
	for this complex. 
	
	\subsection{The presentation handlebody}
	
	Construct a Kirby diagram $L(P)$ with one dotted unknot per generator, with all of these components mutually unlinked. If
	\[
	\rho_j=z_{i_1}^{\varepsilon_1}\cdots
	z_{i_\ell}^{\varepsilon_\ell},
	\qquad \varepsilon_s\in\{-1,1\},
	\]
	draw an attaching circle which passes through the dotted $1$-handle corresponding to
	$z_{i_s}$, with orientation determined by $\varepsilon_s$, in the order
	prescribed by the word. An empty relator is represented by an isolated
	unknot.
	
	Let $W^{(1)}(P)$ be the resulting $1$-handlebody. Give every
	relator component $C_j$ coefficient $0$ in the usual integer
	convention for Kirby diagrams. Here $0$ denotes the usual
	$0$-framing of $C_j$ as a knot in $S^3$; it is not, in general,
	the blackboard framing. Thus, if $w_j$ denotes the writhe of the
	drawn component, the prescribed framing differs from the blackboard
	framing by $-w_j$ full twists.
	
	Denote the resulting compact smooth $4$-dimensional handlebody,
	with its induced PL structure, by $W(P)$. It has one $0$-handle,
	$n$ $1$-handles, and $r$ $2$-handles.
	
	\begin{lemma}
		For every finite presentation $P$, the diagram $L(P)$
		can be constructed in polynomial time with $O(\|P\|^2)$ crossings.
		If $P$ has at least one generator and at least one relator, then
		$L(P)$ can moreover be put, with polynomial overhead, into
		the form required by the Casali--Cristofori construction.
	\end{lemma}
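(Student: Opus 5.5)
We will construct $L(P)$ explicitly as a planar diagram and then count crossings. Place the $n$ dotted unknots as small round circles, pairwise disjoint and unlinked, in a horizontal row. Draw each one as a vertical flat ellipse, so that a strand can pass "through" the $i$-th dotted circle by crossing it twice, once over and once under. Assign to each relator $\rho_j$ its own horizontal level $h_j$ above the row of dotted circles, with $h_1<h_2<\dots<h_r$.

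For each relator $\rho_j=z_{i_1}^{\varepsilon_1}\cdots z_{i_\ell}^{\varepsilon_\ell}$, the attaching circle $C_j$ is built from three kinds of arcs:
\begin{enumerate}[label=\textup{(\alph*)}]
	\item a horizontal arc at height $h_j$;
	\item for each letter $z_{i_s}^{\varepsilon_s}$, a vertical "finger" that descends from level $h_j$ to the dotted circle $i_s$ and threads through it in the direction given by $\varepsilon_s$;
	\item a return arc closing the curve at an extra height $h_j'$ slightly above $h_j$.
\end{enumerate}
The fingers for $C_j$ are placed in a dedicated column slot next to each dotted circle; slot positions are indexed by the pair $(j,s)$ and are all distinct. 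An empty relator is drawn as a small isolated unknot far to the right.

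The crossing count proceeds as follows. Each finger of $C_j$ meets its own dotted circle in $2$ crossings. Every other crossing is between a vertical finger of some $C_{j'}$ and a horizontal arc of some $C_j$ with $h_j<h_{j'}$. The total number of fingers is $\sum_j|\rho_j|$, and each finger crosses at most $2r$ horizontal arcs. This gives $O\bigl(r\sum_j|\rho_j|\bigr)\subseteq O(\|P\|^2)$ crossings. The coordinates are integers of polynomial size, so the planar diagram code (for instance a PD code) is produced in polynomial time. The writhe $w_j$ is read off from the crossings, and the framing $0$ is recorded as a blackboard framing plus $-w_j$ twists. If needed, these twists can be realised by $|w_j|=O(\|P\|^2)$ kinks, which is still polynomial.

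For the second assertion we must check the input format of Casali--Cristofori \cite{CasaliCristofori}. As I recall, their algorithm takes a Kirby diagram of a $4$-dimensional $2$-handlebody with dotted $1$-handles. It needs a connected planar diagram in which framings equal the writhe, i.e.\ are blackboard. It also (I believe) requires every component to have at least one crossing, or requires the diagram to be connected as a planar graph. We therefore make three modifications:
\begin{enumerate}[label=\textup{(\alph*)}]
	\item Realise every framing as blackboard by inserting $-w_j$ Reidemeister-I curls into $C_j$. This is why the $0$-framing is described as a twist correction.
	\item Make the projection connected by applying Reidemeister-II moves between neighbouring components. There is one move per adjacent pair, adding $O(n+r)$ crossings; this does not change the link type, and the framings are unaffected since the new crossings are between distinct components.
	\item Give isolated or empty-relator unknots a crossing by adding a kink paired with a compensating opposite kink. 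Alternatively, Reidemeister-II the unknot against a neighbour, which is where the hypothesis of at least one generator and one relator is used: there is always some other component available to hook onto.
\end{enumerate}
Each modification adds $O(\|P\|^2)$ crossings. The Casali--Cristofori construction is then polynomial in the number of crossings.

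The main obstacle is being faithful to the precise hypotheses of the Casali--Cristofori algorithm. That algorithm may require a specific normal form, for example dotted circles drawn as trivial components with a prescribed position relative to the rest of the diagram. I would first consult their statement and then show that each normalisation is a bounded sequence of planar isotopies and Reidemeister moves with at most polynomial blow-up. The crossing bound itself is routine.
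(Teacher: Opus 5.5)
\paragraph{The layout does not work as stated.}
Your plan has the same shape as the paper's proof: an explicit layout, a quadratic crossing count, then polynomially many framing curls. But the explicit layout fails for essentially every relator in $P_A$. The curve $C_j$ must meet its fingers in word order, and the finger for letter $s$ sits beside dotted circle $i_s$. So a single arc at height $h_j$ would have to run back and forth along itself whenever the indices $i_s$ are not monotone. For example, $R_1$ alternates repeatedly between $x_0$ and $x_1$, whatever order you choose for the two circles.

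Moving the connecting segments to separate sub-levels fixes the drawing but breaks two of your claims:
\begin{enumerate}
\item $C_j$ now has self-crossings, so in general $w_j\neq0$; your layout would have given $w_j=0$ throughout.
\item A single finger can cross up to $|\rho_j|+1$ sub-arcs of $C_j$. Your per-finger bound of $2r$ fails, and so does your claim that every other crossing is a finger of $C_{j'}$ against an arc of $C_j$ with $h_j<h_{j'}$.
\end{enumerate}
The bound itself survives via the paper's coarser count. The diagram consists of $O(\|P\|)$ axis-parallel segments on an $O(\|P\|)\times O(\|P\|)$ grid, and any two meet at most once, so $s_L=O(\|P\|^2)$. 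The writhe estimate that then matters is $\sum_j|w_j|\le s_L$, since each self-crossing contributes to exactly one writhe.

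\paragraph{The Casali--Cristofori step.}
You do not need to guess the input format; the paper applies \cite[Theorem~12]{CasaliCristofori} directly. Since $c_j=0$, the framing-preparation parameter is
$\bar t_j=|w_j|$ if $w_j\neq0$, and $\bar t_j=2$ if $w_j=0$.
The output graph then has order
$8s_L+4\sum_j\bar t_j\le 12s_L+8r$.
That inequality is the quantitative content of ``polynomial overhead.'' The case $w_j=0$ matters: a component whose writhe already matches its framing still costs two curls, which is where the $8r$ term comes from.

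The hypothesis the paper checks for that theorem is that the underlying link is not the standard diagram of the trivial knot. The assumption ``at least one generator and at least one relator'' guarantees this, because there is then a dotted component and a framed component. It is not used to find a neighbour to hook onto. Your Reidemeister-II connecting moves and compensating kinks are polynomial-size isotopies and do no harm, but they are not the conditions the argument has to verify.
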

	
	\begin{proof}
		Place the dotted components in disjoint boxes and route each relator
		component through these boxes in word order. Each letter contributes a
		bounded local pattern, so the diagram has $O(\|P\|)$ elementary arcs
		and bends. Placing them on an $O(\|P\|)$-by-$O(\|P\|)$ grid with fixed
		crossing conventions gives $O(\|P\|^2)$ crossings. Let
		$s_{L}$ denote the resulting number of crossings.
		
		For each relator component $C_j$, let $w_j$ be its writhe. Since
		each self-crossing contributes to the writhe of exactly one component,
		\[
		\sum_{j=1}^{r}|w_j|\le s_{L}.
		\]
		The coefficient of $C_j$ is $c_j=0$. In the notation of
		\cite[Theorem~12]{CasaliCristofori}, the framing-preparation parameter~is
		\[
		\bar t_j=
		\begin{cases}
			|w_j|,&w_j\ne0,\\
			2,&w_j=0.
		\end{cases}
		\]
		Consequently,
		\[
		\sum_{j=1}^{r}\bar t_j
		=
		\sum_{w_j\ne0}|w_j|
		+2\#\{j:w_j=0\}
		\le s_{L}+2r
		=O(\|P\|^2).
		\]
		Thus the additional curls and the remaining local data required by
		Procedure~C have polynomial size and can be constructed in polynomial
		time. The resulting prepared diagram therefore still has polynomial
		size.
	\end{proof}
	
	\subsection{From the Kirby diagram to the double}
	
	For every finite presentation $P$, set
	\[
	X(P):=D(W(P))
	=W(P)\cup_{\partial W(P)}W(P).
	\]
	
	\begin{lemma}
		\label{lem:kirby-output}
		Let $P$ be a finite presentation with at least one generator.
		There is a polynomial-time algorithm which, given $P$, constructs a
		finite triangulation representing the closed PL $4$-manifold $X(P)$.
		Moreover,
		\[
		X(P)=D(W(P))
		\cong_{\PL}\partial(W(P)\times I).
		\]
		The number of top-dimensional simplices is bounded by a polynomial
		in $\|P\|$.
	\end{lemma}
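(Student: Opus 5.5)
The lemma has two parts: a polynomial-time triangulation algorithm for $X(P)$, and the PL identification $D(W(P))\cong_{\PL}\partial(W(P)\times I)$. I would prove the identification first, since it is purely topological, and then show the construction is polynomial.

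\emph{The identification.} I will use the standard decomposition of the boundary of a product. After rounding corners,
\[
\partial(W\times I)=(W\times\{0\})\cup(\partial W\times I)\cup(W\times\{1\}).
\]
The collar $\partial W\times I$ can be absorbed into either copy of $W$. This shows that $\partial(W\times I)$ is two copies of $W$ glued along $\partial W$ by the identity map. That is exactly $D(W)$.

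Corner rounding is a smooth operation. By the remark in the introduction, smooth and PL arguments are interchangeable in dimensions $4$ and $5$, so the diffeomorphism gives a PL homeomorphism. The hypothesis that $P$ has at least one generator plays no role in this part; it is only needed for the Casali--Cristofori input format.

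\emph{The algorithm.} I would proceed in three steps.
\begin{enumerate}
\item By the preceding lemma, construct the Kirby diagram $L(P)$ with $O(\|P\|^2)$ crossings and prepare it for Procedure~C of \cite{CasaliCristofori} with polynomial overhead. If $P$ has no relators, the preparation lemma does not apply directly. In that case I would append an empty relator, which adds a cancelling $2$-handle pair after doubling; alternatively I would treat the case separately, since $X(P)$ is then $\#_n(S^1\times S^3)$, whose triangulation can be written down explicitly.
\item Apply \cite[Theorem~12]{CasaliCristofori}. For a Kirby diagram of a $4$-dimensional $2$-handlebody (dotted $1$-handles and framed $2$-handles), their construction produces a coloured triangulation of the closed $4$-manifold obtained by adding $3$- and $4$-handles along the boundary. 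For a handlebody whose boundary is $\#(S^1\times S^2)$ this gives the unique closing. For $W(P)$, however, I need the double. The standard handle-theoretic description of the double of a $2$-handlebody is: add a $0$-framed meridian to each $2$-handle attaching circle, then cap off with $3$- and $4$-handles. The enlarged diagram is then the input to Casali--Cristofori. Adding one meridian per relator increases the crossing number only linearly, and the resulting handlebody has boundary $\#(S^1\times S^2)$, so the closing is well defined.
\item Check that the output size is polynomial. The Casali--Cristofori construction is linear in crossings plus curl data, and both are bounded polynomially by the preceding lemma. The coloured triangulation is converted to the simplicial format of the preliminaries by reading off facet gluings. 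A coloured triangulation is valid and PL, with sphere vertex links, because it represents a closed PL $4$-manifold.
\end{enumerate}

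\emph{Main obstacle.} The hardest step is justifying the meridian description of $D(W)$ and checking that the enlarged diagram still meets the Casali--Cristofori hypotheses. I would handle it by viewing the double as $W\cup\overline{W}$, where $\overline{W}$ is turned upside down. The dual $2$-handles of $\overline{W}$ attach along $0$-framed meridians of the original attaching circles. The dual $1$-handles become $3$-handles, and the dual $0$-handle becomes the $4$-handle. This is the standard argument of Gompf--Stipsicz, Example~4.6.3. If the framing bookkeeping proves delicate, the fallback is to cite that example directly and note that the meridian curves are added locally with $\bar t=2$, so they are compatible with the framing-preparation count $\sum_j \bar t_j$ in the preceding lemma.
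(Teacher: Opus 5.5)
Your corner-rounding proof of $D(W)\cong_{\PL}\partial(W\times I)$ is fine, but the algorithmic part has a genuine gap in Step~2. The Casali--Cristofori procedure does not output a triangulation of the closed manifold obtained by adding $3$- and $4$-handles. For a diagram of a $2$-handlebody $V$, its output is a coloured graph dual to a triangulation of $V\cup_{\partial V}\operatorname{Cone}(\partial V)$, with a distinguished colour-$4$ vertex whose link is $\partial V$.

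Applied to your meridian-enlarged diagram, it therefore triangulates $W'\cup\operatorname{Cone}(\cs_n(S^1\times S^2))$. Here $W'$ is $W$ with the dual $2$-handles attached, and $n\ge1$ by hypothesis. This complex has an ideal vertex with link $\cs_n(S^1\times S^2)$, so it is neither closed nor $D(W)$, and your Step~3 assertion that all vertex links are spheres fails.

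To reach $D(W)$ you would still have to cap off with $3$- and $4$-handles at the level of triangulations. That means gluing in $\bs_n(S^1\times B^3)$ along an explicit PL homeomorphism from the triangulated boundary to a standard $\cs_n(S^1\times S^2)$. This is a $3$-manifold recognition and homeomorphism-construction problem with no known polynomial-time solution. Uniqueness of the closing (Laudenbach--Po\'enaru) is existential, not algorithmic. So the obstacle you flagged, the meridian description of the double, is standard and harmless; the real obstacle is the capping off, and your proposal does not address it.

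The paper avoids this step entirely, which is the reason for using the double at all:
\begin{enumerate}
\item It applies the procedure to $L(P)$ itself.
\item It truncates the colour-$4$ corner of every $4$-simplex. Each truncated simplex is a prism $\Delta^3\times I$, which the staircase triangulation splits into four $4$-simplices. This gives a triangulation $\mathcal T(P)$ of $W(P)$ with real boundary.
\item It glues two oppositely oriented copies of $\mathcal T(P)$ facet by facet, by the identity map, along their identical boundary triangulations.
\end{enumerate}
No recognition is needed, and the result has at most $8N$ simplices, with $N=8s_L+4\sum_j\bar t_j=O(\|P\|^2)$.

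A smaller point concerns the relator-free case. Your first option is wrong: appending an empty relator replaces $W$ by $W\bs(S^2\times D^2)$, and hence the double by $D(W)\cs(S^2\times S^2)$. The new $2$-handle and its dual form a $0$-framed Hopf link, not a cancelling pair. Your fallback of treating that case directly is what the paper does.
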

	
	\begin{proof}
		If $L(P)$ has no framed components (that is, if the presentation has no relators), then $L(P)$ represents the boundary sum $\bs_n (S^1\times D^3)$ and can be triangulated directly (e.g.\ by layering). 
		
		Otherwise, apply Procedure C of Casali--Cristofori
		\cite[Proposition~6, Theorems~7 and~12]
		{CasaliCristofori} to $L(P)$ (we note that this procedure is implemented in the software \emph{Katie} \cite{Burke-software}).
		
		The output of the Casali--Cristofori procedure is an edge-coloured graph dual to a triangulation~$\widehat{\mathcal{T}}(P)$ of 
		\[
		\widehat{W}(P)=W(P)\cup_{\partial W(P)}\operatorname{Cone}(\partial W(P)),
		\]
		with a distinguished colour-$4$ vertex $v_4$ whose link is $\partial W(P)$. If $\partial W(P)\not\cong S^3$, then $\widehat{\mathcal{T}}(P)$ is an ideal triangulation. We retain this distinguished vertex even if its link is $S^3$. 
		
		Truncate the colour-$4$ corner in each $4$-simplex of $\widehat{\mathcal{T}}(P)$. Each resulting truncated $4$-simplex is a copy of $\Delta^3\times I$, which can be divided into four $4$-simplices using the standard staircase triangulation for a product. The unpaired tetrahedra assemble to form $\operatorname{lk}(v_4)\cong\partial W(P)$, and the resulting triangulation $\mathcal{T}(P)$ triangulates $W(P)$ (and has size linear in that of $\widehat{\mathcal{T}}(P)$). Note that if $\operatorname{lk}(v_4)\cong S^3$, truncating $v_4$ simply removes a $4$-ball attached to $W(P)$.
		
		Since the Kirby diagram has at least one dotted component and
		at least one framed component, its underlying link is not the
		standard diagram of the trivial knot. Hence
		\cite[Theorem~12]{CasaliCristofori} applies and shows that the graph
		produced by Procedure~C has order
		\[
		N
		=
		8s_{L}
		+4\sum_{j=1}^{r}\bar t_j
		\le
		12s_{L}+8r
		=
		O(\|P\|^2).
		\]
		Thus $\widehat{\mathcal T}(P)$ has $N$ simplices. Truncating the
		distinguished colour-$4$ corner and applying the
		staircase subdivision into four $4$-simplices produces at most $4N$ simplices in
		$\mathcal T(P)$.
		
		Now take two copies $\mathcal{T}_+$ and $\mathcal{T}_-$ of $\mathcal T(P)$, with one copy oppositely
		oriented with respect to the other. The boundary triangulations are combinatorially isomorphic. Glue each boundary facet of $\mathcal{T}_+$ to the corresponding boundary facet of $\mathcal{T}_-$ by the identity affine map. The resulting closed triangulation has polynomial
		size and triangulates
		\[
		D(W(P))=W(P)\cup_{\partial W(P)}W(P).
		\]
		Finally,
		\[
		D(W(P))\cong_{\PL}\partial(W(P)\times I),
		\]
		which proves the lemma.
	\end{proof}
	
	\section{Topology of the output}
	\label{sec:output-topology}
	
	In this section we first compute the fundamental group of $X(P)$. We then identify its PL type in the Andrews–Curtis-trivial case by constructing a spin $5$-dimensional thickening, transporting its spine under a $3$-deformation, and applying the uniqueness theorem for thickenings.
	
	\begin{lemma}
		\label{lem:pi1-double}
		For every finite presentation $P$,
		\[
		\pi_1(X(P))\cong G(P).
		\]
	\end{lemma}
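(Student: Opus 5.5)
The plan is to use the identification $X(P)\cong_{\PL}\partial(W(P)\times I)$ from Lemma~\ref{lem:kirby-output} and compute $\pi_1$ in two steps. First I show $\pi_1(W(P))\cong G(P)$. Then I show that the inclusion $\partial(W(P)\times I)\hookrightarrow W(P)\times I$ induces an isomorphism on $\pi_1$. Since $W(P)\times I$ deformation retracts to $W(P)$, the two steps together give $\pi_1(X(P))\cong G(P)$.

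For the first step, $W(P)$ has a handle decomposition with one $0$-handle, $n$ $1$-handles and $r$ $2$-handles. It therefore deformation retracts onto a CW complex with one $0$-cell, $n$ $1$-cells and $r$ $2$-cells. The core of the $1$-handlebody $W^{(1)}(P)=\natural_n(S^1\times D^3)$ is a wedge of $n$ circles, with free fundamental group on $z_1,\dots,z_n$; here the dotted-circle convention identifies the $i$-th generator with a meridian loop through the $i$-th dotted unknot. By construction, the attaching circle $C_j$ passes through the dotted components in the order and orientations prescribed by $\rho_j$, so it represents the word $\rho_j$ in $\pi_1(W^{(1)}(P))$. Crossings between relator components and the framing play no role here. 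Seifert--van Kampen, or the standard fact that attaching a $2$-handle kills its attaching class, then gives $\pi_1(W(P))\cong G(P)$. In fact $W(P)$ is a regular neighbourhood of a complex simple-homotopy equivalent to $K(P)$.

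For the second step, $M=W(P)\times I$ is a compact $5$-manifold. It has a handle decomposition with handles of index at most $2$, since it is the thickening of the $4$-dimensional handlebody by a product with $I$. Turning this decomposition upside down relative to $\partial M$ exhibits $M$ as $\partial M\times I$ with handles of index at least $5-2=3$ attached. Attaching handles of index $\geq3$ does not change $\pi_1$, because the attaching spheres have dimension $\geq2$. Hence $\partial M\hookrightarrow M$ is a $\pi_1$-isomorphism. Equivalently, by general position, a $2$-complex spine of codimension $3$ in the $5$-manifold $M$ has complement with the same $\pi_1$ as $\partial M$, and loops and homotopies in $M$ can be pushed off the spine into a collar of the boundary.

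The main point requiring care is the second step: one must justify that the dual decomposition of $W(P)\times I$ relative to its whole boundary uses only handles of index $\geq3$. I would argue directly, noting that each $k$-handle $D^k\times D^{4-k}$ of $W(P)$ yields the $5$-dimensional $k$-handle $D^k\times D^{5-k}$ of $M$, whose dual has index $5-k\geq3$ for $k\leq2$. This is the standard fact, as in Gompf--Stipsicz's treatment of doubles.
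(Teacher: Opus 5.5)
Your proof is correct, but it takes a different route from the paper.

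The paper stays in dimension four. Turning the handle decomposition of $W(P)$ upside down gives handles of index $2,3,4$ relative to $\partial W(P)$. So the paper only gets surjectivity of $\pi_1(\partial W(P))\to\pi_1(W(P))$, since the dual $2$-handles may impose relations. It then applies Seifert--van Kampen to $D(W(P))=W(P)\cup_{\partial W(P)}W(P)$, and the resulting pushout of $G(P)$ with itself along one common epimorphism is $G(P)$.

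You instead pass to the $5$-dimensional thickening $W(P)\times I$, using the identification $D(W(P))\cong_{\PL}\partial(W(P)\times I)$ recorded in Lemma~\ref{lem:kirby-output}. There the dual handles have index $\geq 3$, so the boundary inclusion is already a $\pi_1$-isomorphism and no amalgamated product has to be computed.

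What each route buys:
\begin{enumerate}[label=\textup{(\alph*)}]
\item Your version gives a slightly stronger conclusion: the inclusion $X(P)\hookrightarrow W(P)\times I$ is a $\pi_1$-isomorphism, indeed $2$-connected.
\item Your version also fits the five-dimensional thickening viewpoint used later in Lemma~\ref{lem:zero-w2-product} and Lemma~\ref{lem:AC-standard-double}.
\item The paper's argument needs only surjectivity of the boundary map and does not depend on identifying the double with $\partial(W(P)\times I)$.
\end{enumerate}
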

	
	\begin{proof}
		The boundary of $W(P)$ is connected; choose a basepoint there. The handle
		CW complex gives
		\[
		\pi_1(W(P))\cong G(P).
		\]
		Moreover, the inclusion
		\[
		\partial W(P)\hookrightarrow W(P)
		\]
		is surjective on fundamental groups. Indeed, turning the handle decomposition
		upside down constructs $W(P)$, relative to its boundary, with handles of
		indices $2,3,4$, and these handles do not create fundamental-group
		generators.
		
		The Seifert--van Kampen theorem gives
		\[
		\pi_1(D(W(P)))
		\cong
		\pi_1(W(P))*_{\pi_1(\partial W(P))}\pi_1(W(P)).
		\]
		After identifying the two copies of $W(P)$, both edge maps are the same
		epimorphism. The pushout of two copies of a group along the same epimorphism
		is that group. Hence
		\[
		\pi_1(D(W(P)))\cong\pi_1(W(P))\cong G(P).
		\]
		Since $X(P):=D(W(P))$, the result follows.
	\end{proof}
	
	\begin{lemma}
		\label{lem:zero-w2-product}
		The 5-manifold $N(P)=W(P)\times I$ is an orientable 5-dimensional thickening of $K(P)$ with vanishing second Stiefel--Whitney class $w_2$. Explicitly, $K(P)$ admits a PL embedding in $\operatorname{int}(N(P))$ such that, identifying $K(P)$ with its image,
		\[N(P)\searrow K(P),\qquad w_2(N(P))=0.\]
	\end{lemma}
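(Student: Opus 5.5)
We have to prove three things: $N(P)$ is orientable, it collapses onto an embedded copy of $K(P)$, and $w_2(N(P))=0$. We handle each in turn.

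\emph{Orientability and the spine.} First, $W(P)$ is built from a $0$-handle, orientable $1$-handles (dotted unknots in $S^3$) and $2$-handles, so it is an orientable compact smooth $4$-manifold. Hence $N(P)=W(P)\times I$ is orientable. Next we build the spine in $W(P)$, in the standard way:
\begin{itemize}
\item take the core point of the $0$-handle;
\item take the core arcs of the $1$-handles, joined radially to that point inside the $0$-handle, giving a wedge of $n$ circles;
\item take the core disc of each $2$-handle, extended by the radial cone in the $1$-handlebody $W^{(1)}(P)$ over its attaching circle.
\end{itemize}
Collapsing the co-cores of the handles in reverse order of index shows that $W(P)$ collapses onto a $2$-complex $K'$. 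Because each attaching circle runs through the $1$-handles in the order of the letters of $\rho_j$, $K'$ is a subdivision of, and PL homeomorphic to, $K(P)$.

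\emph{Embedding in $N(P)$.} There is one subtlety: $K'$ meets the attaching regions only through collapsible collars, but it should lie in the interior. Pushing $K'$ slightly into $\operatorname{int} W(P)$ fixes this. Then embed it as $K'\times\{1/2\}\subset \operatorname{int}(N(P))$. The collapse $W(P)\times I\searrow W(P)\times\{1/2\}$ is the standard product collapse, and composing it with $W(P)\searrow K'$ gives $N(P)\searrow K(P)$.

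\emph{Vanishing of $w_2$.} We have $TN(P)\cong \mathrm{pr}^*TW(P)\oplus\varepsilon^1$, so $w_2(N(P))=\mathrm{pr}^*w_2(W(P))$, and it suffices to show $w_2(W(P))=0$. Since $W(P)$ deformation retracts to a $2$-complex, $H^2(W(P);\mathbb Z/2)$ is computed cellularly. A $2$-handlebody is spin exactly when the Kirby diagram has a characteristic sublink condition satisfied. Equivalently, $w_2$ evaluated on each $2$-handle's cellular class equals that handle's framing coefficient mod $2$, computed relative to the dotted $1$-handles.

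Making this precise is the main obstacle. Dotted circles complicate the usual ``all framings even'' criterion when the attaching circles have nonzero linking with the dotted components. The cleanest route I would try first avoids Kirby calculus entirely. The obstruction $w_2$ evaluated on a $2$-cell is the parity of the framing relative to the canonical framing induced from the trivialised $1$-skeleton. Over the $1$-handlebody $W^{(1)}(P)\cong\natural_n(S^1\times D^3)$, which is spin with its unique spin structure restricting to the product one, this relative framing is measured in the boundary $\#_n(S^1\times S^2)$.

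For a dotted-circle diagram, the induced framing on an attaching circle $C_j$ agrees with its Seifert ($0$-) framing in $S^3$ up to an even correction. That correction is $2\times$ linking contributions, because passing through a dotted circle changes the relevant trivialisation by a full twist of even parity. Since every coefficient in $L(P)$ is $0$, every $2$-cell evaluates trivially, so $w_2(W(P))=0$.

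An alternative fallback is to note that $K(P)$ is $2$-dimensional. Then $w_2$ of any $5$-dimensional thickening is determined by the thickening's stable normal data over $K$. One would argue that $W(P)\times I$ embeds in $\mathbb R^5$-like neighbourhoods via an immersion of $K(P)$ in $\mathbb R^5$ with trivial normal block bundle. For that route I would need to check that the even-framing choice yields a trivial, rather than merely orientable, stable bundle over each $2$-cell.
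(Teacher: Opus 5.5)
Your spine argument is sound and takes a different route from the paper. You build $K'$ inside the $4$-manifold $W(P)$ from the handle structure and then use $W(P)\times I\searrow W(P)\times\{1/2\}\searrow K'\times\{1/2\}$, which avoids the paper's $5$-dimensional general-position step. To get literally a subdivision of $K(P)$, cone the arcs of the $C_j$ in the $0$-handle only after extending them radially to the centres of the attaching balls, so that all $2$-cells share the edges of the wedge.

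The genuine gap is the $w_2$ step, which you yourself leave as ``the main obstacle''. Two of your claims there are wrong:
\begin{itemize}
\item $\natural_n(S^1\times D^3)$ does not have a unique spin structure. Its spin structures form a torsor over $H^1(W^{(1)}(P);\mathbb Z/2)\cong(\mathbb Z/2)^n$.
\item There is no choice-free ``even correction''. Changing the spin structure on $W^{(1)}(P)$ by $\epsilon$ changes the parity condition for the handle on $C_j$ by $\sum_i\epsilon_i e_{ij} \bmod 2$, where $e_{ij}$ is the exponent sum of $z_i$ in $\rho_j$. For $P_A$ these are often odd: each letter-gate relator $d_i=y_i^{-1}x_\ell^{\varepsilon}$ has odd exponent sum in $y_i$ and in $x_\ell$.
\end{itemize}
So whether a $0$-framed handle extends depends on a choice you never make, and ``a full twist of even parity'' is not an argument. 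The $\mathbb R^5$ fallback is explicitly left unchecked.

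The missing idea is the specific choice of spin structure. Write $W^{(1)}(P)=B^4\setminus\nu(\Delta)$, where $\Delta$ is a set of disjoint trivial discs bounded by the dotted unknots and pushed into $B^4$. Let $s_0$ be the restriction of the spin structure of $B^4$.
\begin{itemize}
\item Each $C_j$ lies in $S^3\setminus\nu(\partial\Delta)$, so a collar of $C_j$ in $W^{(1)}(P)$ is also a collar in $B^4$.
\item Whether $s_0$ extends over the $2$-handle depends only on this collar and the framing. Hence it extends exactly when it would for a $2$-handle attached to $B^4$ along $C_j$ with the same coefficient $c_j$.
\item That happens iff $c_j$ is even, since the trace has intersection form $(c_j)$.
\end{itemize}
Since all $c_j=0$, $s_0$ extends over every $2$-handle, so $w_2(W(P))=0$. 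This is the Gompf--Stipsicz \S5.7 rule the paper cites for ``the product choice across each $1$-handle''. With this in place, your reduction $w_2(N(P))=\mathrm{pr}^*w_2(W(P))$ finishes the proof.
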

	
	\begin{proof}
		Crossing the handle decomposition of $W(P)$ with $I$ gives
		$N(P)$ a $5$-dimensional handle decomposition with only
		$0$-, $1$- and $2$-handles. Its $0/1$-handlebody collapses
		onto the wedge $V$ of the $1$-handle cores. Under this collapse,
		the attaching circle of the $j$-th $2$-handle maps to the
		cellular loop in $V$ spelling $\rho_j$.
		
		Adjoin each $2$-handle core via the mapping cylinder of its attaching loop. 
		In dimension $5$, PL general position allows
		these mapping cylinders and core discs to be chosen simultaneously.
		Their union with $V$ is a PL-embedded copy of $K(P)$, after
		subdivision. 
		
		The core-and-mapping-cylinder construction exhibits
		$N(P)$ as a regular neighbourhood of this copy, and hence gives a
		PL collapse
		\(
		N(P)\searrow K(P).
		\)
		It remains to determine the thickening class. The $0/1$-handlebody
		$W^{(1)}(P)$ is spin; choose the spin structure obtained by making
		the product choice across each $1$-handle. The spin-extension rule
		for dotted Kirby diagrams states that this spin structure extends
		over a $2$-handle precisely when the integer coefficient of its
		attaching circle is even
		\cite[\S 5.7]{GompfStipsicz}. Every $2$-handle of $W(P)$ has
		coefficient $0$, so the chosen spin structure extends over all of
		them. Hence
		\(
		w_2(W(P))=0
		\)
		and therefore
		\[
		w_2(N(P))
		=
		w_2(W(P)\times I)
		=
		\operatorname{pr}_{W(P)}^*w_2(W(P))
		=
		0.
		\]
		
		Finally, the collapse induces an isomorphism
		\[
		H^2(K(P);\mathbb Z/2)
		\xrightarrow{\;\cong\;}
		H^2(N(P);\mathbb Z/2).
		\]
		Hence the thickening class whose pullback is $w_2(N(P))$ is the
		zero class.
	\end{proof}
	
	\begin{theorem}[Spine transport under $3$-deformation]
		\label{thm:spine-transport}
		Let $N$ be a compact PL $5$-manifold with boundary and suppose that $N$
		collapses onto a finite $2$-complex
		$K\subset\operatorname{int}(N)$. If $K$ $3$-deforms to a finite
		$2$-complex $M$, then $M$ admits a PL embedding in
		$\operatorname{int}(N)$ such that $N$ collapses onto the embedded copy of
		$M$.
	\end{theorem}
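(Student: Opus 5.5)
The strategy is to induct on the length of the $3$-deformation, keeping at each stage an embedded $2$-complex (or, in the middle of the deformation, an embedded complex of dimension at most three) onto which $N$ collapses. Write the deformation as $K=K_0,K_1,\dots,K_s=M$, where each step is an elementary expansion or collapse and every $K_i$ has dimension at most $3$. A standard rearrangement lets us assume the deformation is ordered: first all expansions, then all collapses. Concretely, $K\nearrow L\searrow M$ with $\dim L\le 3$. So it suffices to treat two cases. In the first, we embed an expansion $K\nearrow L$ inside $N$ compatibly with the collapse $N\searrow K$. In the second, we pass from $N\searrow L$ to $N\searrow M$ when $L\searrow M$.

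The collapse case is immediate. If $L\subset\operatorname{int}(N)$ is embedded with $N\searrow L$, and $L\searrow M$, then the restriction of the embedding of $L$ to $M$ is a PL embedding and $N\searrow L\searrow M$. Composition of collapses is a collapse after common subdivision, so nothing is needed here.

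The expansion case carries the content. Treat one elementary expansion at a time: $L'=L\cup B$, where $B$ is an $e$-ball ($e\le 3$) meeting $L$ in a face $D\subset\partial B$ of dimension $e-1$, and $N\searrow L$ with $L\subset\operatorname{int}(N)$. We must embed $B$ in $\operatorname{int}(N)$ meeting $L$ exactly in $D$, so that $N$ still collapses onto $L\cup B$.

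\begin{enumerate}
\item Use regular neighbourhood theory to write $N$ as a regular neighbourhood of $L$. Near $D$, a small regular neighbourhood of $L$ looks like a mapping cylinder.
\item Choose an $e$-ball $B$ that is a thin collar on the free face $\partial B\setminus\operatorname{int}D$ side. Take it inside a small regular neighbourhood $R$ of $L$. Use PL general position in dimension $5$ to make $B\cap L=D$: the ball $B$ has dimension at most $3$ and $L$ has dimension at most $3$, but we only need to push the interior of the complementary face off $L$. That face has dimension at most $2$, and $2+3<5$, so this works. The interior of $B$ is then pushed off by an isotopy fixing $D$.
\item Observe that $R$ is then also a regular neighbourhood of $L\cup B$, by uniqueness of regular neighbourhoods, since $L\cup B\searrow L$. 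Hence $R\searrow L\cup B$.
\item Since $N\searrow R$ (the region $N\setminus\operatorname{int}R$ is a collar of $\partial N$ by the regular neighbourhood annulus theorem), conclude $N\searrow L\cup B$.
\end{enumerate}

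Iterating over the expansions and then applying the collapse case gives the embedding of $M$ with $N\searrow M$.

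The main obstacle is the general-position step in the expansion case when $e=3$. Here $B$ and $L$ both have dimension $3$ in a $5$-manifold, so the intersection of their interiors is expected to be $1$-dimensional and cannot simply be removed by general position. The first thing to try is to avoid general position altogether: realise $B$ as the cone/collar construction $D\times[0,\varepsilon]$ pushed into the regular neighbourhood along the mapping-cylinder structure. Because $N$ collapses onto $L$ through the normal directions, such a $B$ can be chosen disjoint from $L\setminus D$ by construction. Failing that, apply the general fact that expansions are realised inside regular neighbourhoods; see \cite{Matveev-3Manifolds}.
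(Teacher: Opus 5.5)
\textbf{There is a genuine gap: the existence of the ball $B$ when $e=3$ is never established, and your inductive step as formulated is false.}

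Several parts of your argument are fine: the reduction to $K\nearrow L\searrow M$, the collapse case, and steps 3--4. Once you have a ball $B\subset\operatorname{int}R$ with $B\cap L=D$, $R$ is a regular neighbourhood of $L\cup B$ and $N\searrow R\searrow L\cup B$. But producing that ball for $e=3$ is the entire content of the statement, and you try to extend an \emph{arbitrary} previously built embedding of $L$, which cannot always be done.

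Your general-position count is wrong. Since $2+3=5$, the free face generically meets $L$ in points, and $\operatorname{int}B$ meets $L$ in arcs.

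The mapping-cylinder pushoff is not automatic either. It needs a section over the $2$-disc $D$ of the map $\partial R\to L$. Its fibre over $x$ is, up to homotopy, $\operatorname{lk}(x,N)\setminus\operatorname{lk}(x,L)$, the complement of a codimension-two subcomplex of a sphere. That complement is connected, which is why $e\le 2$ works. It is not in general simply connected, so there is an obstruction over the $2$-cells of $D$.

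Here is a concrete failure. At $p\in\operatorname{int}D$, the link of $B$ must be a disc $E\subset\operatorname{lk}(p,N)\cong S^4$ with $\partial E=c:=\operatorname{lk}(p,D)$ and $E\cap\operatorname{lk}(p,L)=c$.
\begin{enumerate}
\item Take $N\searrow K=S^2\vee S^2$ with wedge point $p$.
\item Expand two $3$-balls $B_a,B_b$ along the same disc in the first sphere, containing $p$ in its interior.
\item Then expand along a disc $D$ in the second sphere with $p\in\operatorname{int}D$.
\end{enumerate}
The links of $B_a$ and $B_b$ at $p$ are discs with common boundary, namely the link of $p$ in the first sphere. Together they form a $2$-sphere $\Sigma\subset S^4$ disjoint from $c$. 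Your procedure accepts any admissible fin, so the link disc of $B_b$ may be tubed into a meridian $2$-sphere of $c$. Then $c$ and $\Sigma$ have linking number $\pm1$ in $S^4$, so every disc bounded by $c$ meets $\Sigma$. Hence no $3$-ball $B$ with $B\cap L=D$ exists for that embedding of $L$.

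A direct proof along your lines must therefore choose the $3$-dimensional fins coherently, or re-embed, with an argument that this is always possible. The pointer to Matveev does not supply this.

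The paper sidesteps all of this by quoting \cite[\S2.1, Fact~(2)]{FKL}. A self-contained alternative avoids embedding any $3$-dimensional intermediate complex. Bring the $3$-deformation into Andrews--Curtis-type normal form, so that only $2$-complexes occur. Then realise each move in the $5$-dimensional handlebody by handle slides, by births and deaths of cancelling $1$/$2$-handle pairs, and by isotopies of attaching circles in the $4$-dimensional boundary, where homotopy implies isotopy.
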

	
	\begin{proof}
		Apply \cite[\S~2.1, Fact~(2)]{FKL} to the spine $K\subset\operatorname{int}(N)$ and the given 3-deformation from $K$ to $M$. It yields a PL embedding $j\colon M\hookrightarrow\operatorname{int}(N)$ whose image is a spine of $N$. Thus $N\searrow j(M)$, as required. 
	\end{proof}
	
	\begin{theorem}[Uniqueness of presentation thickenings]\label{thm:thickening-classification} 
		Let $K$ be a connected finite presentation complex. For every class
		\[
		w\in H^2(K;\mathbb Z/2),
		\]
		there exists a compact orientable smooth 5-manifold $N(K,w)$ containing a PL-embedded copy of $K$ in its interior and admitting a PL collapse
		\[
		N(K,w)\searrow K,
		\]
		such that the retraction $r:N(K,w)\to K$ furnished by this collapse satisfies
		\[
		r^*w=w_2(N(K,w)).
		\]
		Any two such thickenings for the same $K$ and $w$ have diffeomorphic underlying smooth manifolds. In particular, their underlying PL manifolds are PL-homeomorphic.
	\end{theorem}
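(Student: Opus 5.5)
The plan is to deduce both existence and uniqueness from Wall's theory of thickenings in the stable range, together with a computation of stable bundles over a $2$-complex. For a finite complex $K$ of dimension $k$, thickenings of dimension $n\geq 2k+1$ are classified by stable vector bundles over $K$, via the stable normal (equivalently, stable tangent) bundle restricted along $K$. Here $k=2$ and $n=5=2k+1$, so we are exactly at the edge of the stable range. I would first verify that the boundary case $n=2k+1$ is covered for both existence and uniqueness, citing Wall's ``Classification problems in differential topology IV: thickenings'' for the smooth statement and using the uniqueness of smoothings in dimension five to pass to PL. Orientability of $N$ cuts the relevant bundles down to those classified by $[K,BSO]$.

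The homotopy step is to show that $w_2$ gives a bijection $[K,BSO]\to H^2(K;\mathbb Z/2)$ for a connected $2$-complex $K$. Since $\pi_1(BSO)=0$, $\pi_2(BSO)=\mathbb Z/2$ and $\pi_3(BSO)=0$, the Postnikov $3$-stage of $BSO$ is $K(\mathbb Z/2,2)$, and its fundamental class is $w_2$. Hence for $\dim K\le 2$ (indeed $\le 3$) obstruction theory gives
\[
[K,BSO]\cong[K,K(\mathbb Z/2,2)]=H^2(K;\mathbb Z/2),
\]
and the identification is realised by $w_2$. Naturality of $w_2$ under the retraction $r$ then gives $r^*w=w_2(N(K,w))$, which is how the class is read off from the thickening.

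For existence I would also give a concrete construction, as a check. Thicken the single vertex and the $1$-cells to $\natural(S^1\times D^4)$, which is orientable. Attach $5$-dimensional $2$-handles along the relator loops; these embed by general position in the $4$-dimensional boundary. The framings of each attaching circle form a torsor for $\pi_1(SO(4))=\mathbb Z/2$, and switching the framing of the $j$-th handle changes $w_2$ by the cochain dual to that $2$-cell. In this way every cocycle, and hence every class $w$, is realised. Smoothing corners yields a compact orientable smooth $5$-manifold that collapses onto $K$, as in the proof of Lemma~\ref{lem:zero-w2-product}.

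The step I expect to be the main obstacle is uniqueness, and in particular the claim that two thickenings of $K$ with the same bundle data are diffeomorphic. At $n=2k+1$, Wall's classification says that thickenings map bijectively to bundles, and I would cite this directly. As a fallback I would argue by hand with handles. Any two such thickenings are regular neighbourhoods of embeddings of $K$, so each is a handlebody built from the cells of $K$. The attaching circles of the $2$-handles lie in a $4$-manifold, and there homotopy implies isotopy for circles in the relevant range. It remains to match the framings, which are determined by $w$ cell by cell only up to a coboundary. A coboundary change corresponds to twisting a $1$-handle by the generator of $\pi_1(SO(4))$, which is a diffeomorphism of $\natural(S^1\times D^4)$. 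This last point is where the care lies, and it would finish the argument.
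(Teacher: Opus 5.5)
Your proposal is correct and matches the paper's proof. The paper likewise cites the Wall/Hambleton--Kreck--Teichner classification of $5$-dimensional thickenings and sketches the same handle argument you give: attaching circles are fixed up to isotopy by the relators, framings assemble into a $2$-cochain representing $w_2$, and coboundary changes are absorbed by twisting $1$-handles. Your computation $[K,BSO]\cong H^2(K;\mathbb Z/2)$ just makes explicit the bundle-theoretic content behind that citation. One small correction: the framings of an attaching circle in the $4$-dimensional boundary form a torsor for $\pi_1(SO(3))$, since its normal bundle has rank $3$, not for $\pi_1(SO(4))$. This is harmless, because the inclusion induces $\pi_1(SO(3))\cong\pi_1(SO(4))\cong\mathbb Z/2$, and $\pi_1(SO(4))$ is the right group for the $1$-handle twists.
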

	\begin{proof}
		This is the presentation-complex form of the thickening classification in \cite[Lemma~3.3]{HKT}; see also \cite{WallThickenings}. In the associated 5-dimensional handle construction, the attaching circles of the 2-handles are determined up to isotopy by the relators. The framing choices lie in $\pi_1(\mathrm{SO}(3))\cong\mathbb Z/2$ and determine a cellular 2-cochain representing the second Stiefel–Whitney class under the identification with $H^2(K;\mathbb Z/2)$. Every class $w$ can be realised in this way. Changing the representing cochain by a coboundary is realised by twisting the 1-handles and does not change the diffeomorphism type, giving the asserted uniqueness.
	\end{proof}
	
	\begin{lemma}
		\label{lem:AC-standard-double}
		Let
		\[
		P=\langle z_1,\dots,z_n\mid\rho_1,\dots,\rho_{n+k}\rangle
		\]
		be $k$-AC-trivial. Then
		\[ X(P)\cong_{\mathrm{PL}}\partial(W(P)\times I)\cong_{\PL}\cs_k(S^2\times S^2), \]
		where the $k=0$ case means $S^4$.
	\end{lemma}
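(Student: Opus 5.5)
The plan is to pass to five-dimensional thickenings and use \Cref{thm:thickening-classification}. By \Cref{lem:kirby-output}, $X(P)\cong_{\PL}\partial(W(P)\times I)=\partial N(P)$. So it suffices to identify $N(P)$ up to PL homeomorphism with a standard $5$-manifold whose boundary is $\cs_k(S^2\times S^2)$. The candidate is
\[
N_0:=\natural_n(S^1\times D^4)\,\natural\,\natural_n(S^2\times D^3)\,\natural\,\natural_k(S^2\times D^3),
\]
or more economically $\natural_k(S^2\times D^3)$ after cancellation. Its boundary is $\cs_n(S^1\times S^3)\cs_n(S^2\times S^2)\cs_k(S^2\times S^2)$ before cancellation and $\cs_k(S^2\times S^2)$ after.

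The steps are as follows.

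\textbf{Step 1: a $3$-deformation.} Andrews--Curtis moves on the relator tuple are realised by $3$-deformations of presentation complexes. Inversion, multiplication by another relator and conjugation are realised by expanding and collapsing a $3$-cell against the relevant $2$-cells, and permutation does nothing. So $K(P)$ $3$-deforms to
\[
K_0:=K\langle z_1,\dots,z_n\mid z_1,\dots,z_n,1,\dots,1\rangle.
\]
This complex collapses, after cancelling each $1$-cell against the $2$-cell attached along it, onto $\bigvee_k S^2$.

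\textbf{Step 2: transport the spine.} By \Cref{lem:zero-w2-product}, $N(P)$ is an orientable thickening of $K(P)$ with $w_2=0$. By \Cref{thm:spine-transport}, $K_0$ embeds as a spine of $N(P)$. The retraction pulls back some class in $H^2(K_0;\mathbb Z/2)$ to $w_2(N(P))=0$. Since the collapse induces an isomorphism on $H^2(\,\cdot\,;\mathbb Z/2)$, that class is $0$. Thus $N(P)$ is an orientable thickening of $K_0$ with $w=0$.

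\textbf{Step 3: the model thickening.} Take $N_0:=W(P_0)\times I$, where $P_0$ is the displayed presentation of $K_0$. In the Kirby diagram $L(P_0)$, each relator $z_i$ is a $0$-framed circle through the dotted circle $z_i$ once. This cancels the $1$-handle, and the $k$ empty relators become $0$-framed unknots, split from everything. Hence $W(P_0)\cong\natural_k(S^2\times D^2)$ and $N_0\cong\natural_k(S^2\times D^3)$, with boundary $\cs_k(S^2\times S^2)$. Here we use that the boundary of $(S^2\times D^2)\times I$ is $S^2\times S^2$, and boundary sum gives connected sum of boundaries. By \Cref{lem:zero-w2-product}, $N_0$ is also a $w=0$ thickening of $K_0$. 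Then \Cref{thm:thickening-classification} gives $N(P)\cong N_0$, and taking boundaries gives $X(P)\cong_{\PL}\cs_k(S^2\times S^2)$. For $k=0$ we get $N_0=D^5$ and boundary $S^4$.

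\textbf{Main obstacle.} The delicate point is Step 2. \Cref{thm:thickening-classification} requires the collapse retraction onto the transported copy of $K_0$ to pull back the right class. I would check that the retraction $N(P)\to K_0$ induces an isomorphism on mod-$2$ cohomology, which holds because it is a homotopy equivalence. Then $w_2(N(P))=0$ forces $w=0$, independent of how the class was transported. A secondary check is that AC moves, especially conjugation by arbitrary $u$ and relator multiplication, are genuinely $3$-deformations. This is standard: each is an elementary expansion of a $3$-ball whose boundary is the old $2$-cell, the new $2$-cell and a thin band, followed by a collapse.
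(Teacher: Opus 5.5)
Your proof is correct and is essentially the paper's argument: AC moves give a $3$-deformation, \Cref{thm:spine-transport} transports the spine inside $W(P)\times I$, $w_2=0$ forces the thickening class to vanish, and \Cref{thm:thickening-classification} identifies the thickening. The only variation is that you compare thickenings of $K_0$, using $W(P_0)\times I\cong\bs_k(S^2\times D^3)$ (obtained by $1$-/$2$-handle cancellation together with \Cref{lem:zero-w2-product}) as the model, whereas the paper collapses further to $\bigvee_k S^2$ and uses $\bs_k(S^2\times D^3)$ directly as a thickening of the wedge.

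You should delete the opening ``candidate'' sentence. The manifold $\bs_n(S^1\times D^4)\bs\,\bs_n(S^2\times D^3)\bs\,\bs_k(S^2\times D^3)$ has $H_1\cong\mathbb Z^n$, so it is neither $W(P_0)\times I$ nor $\bs_k(S^2\times D^3)$. The reason is that the $2$-handles for the relators $z_i$ run once over the $1$-handles and cancel them; they do not split off as $S^2\times D^3$ summands.
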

	
	\begin{proof}
		The geometric interpretation of Andrews--Curtis moves gives a
		$3$-deformation between the corresponding presentation complexes:
		inversion reverses a $2$-cell, multiplication is a $2$-cell slide, and
		conjugation changes its whisker
		\cite{AndrewsCurtis,Cohen,HogAngeloniMetzler}. Hence $K(P)$
		$3$-deforms to
		\[
		K\langle z_1,\dots,z_n\mid z_1,\dots,z_n,1,\dots,1\rangle .
		\]
		The first $n$ $2$-cells collapse the $1$-cells, while the $k$ empty
		relators remain as $2$-spheres. Thus
		\[
		K(P)\quad 3\text{-deforms to}\quad \wedgek{k}.
		\]
		
		By Lemma~\ref{lem:zero-w2-product}, $N(P)=W(P)\times I$ collapses onto
		$K(P)$ and $w_2(N(P))=0$. Theorem~\ref{thm:spine-transport} produces an
		embedded copy of $\wedgek{k}$ onto which $N(P)$ collapses. If
		\[
		r'\colon N(P)\longrightarrow\wedgek{k}
		\]
		is the resulting collapse, then $(r')^*$ is an isomorphism on
		$H^2(-;\mathbb Z/2)$. The thickening class relative to this new spine is
		therefore zero, since its pullback is $w_2(N(P))=0$.
		
		The boundary connected sum
		\[
		\bs_k(S^2\times D^3)
		\]
		is also an orientable $5$-dimensional thickening of $\wedgek{k}$ with vanishing $w_2$.
		The wedge is itself a presentation complex, with no generators and $k$
		empty relators, so Theorem~\ref{thm:thickening-classification} applies and
		gives
		\[
		W(P)\times I\cong_{\PL}\bs_k(S^2\times D^3).
		\]
		Taking boundaries yields
		\[
		X(P)=\partial(W(P)\times I)
		\cong_{\PL}
		\partial\bigl(\bs_k(S^2\times D^3)\bigr)
		\cong_{\PL}
		\cs_k(S^2\times S^2).\qedhere
		\]
	\end{proof}
	
	\section{Proof of the main theorem}
	\label{sec:proof-of-theorem}
	
	Fix a closed connected PL $4$-manifold $Z$ and a finite
	triangulation $\mathcal Z$ of $Z$. Let $A$ be an instance of
	$\CWP(F)$. Construct $P_A$ as in
	Section~\ref{sec:final-presentation}. Apply
	Lemma~\ref{lem:kirby-output} to $P_A$, and let $\mathcal T_A$ be the
	resulting finite triangulation. Set
	\(
	X_A:=|\mathcal T_A|.
	\)
	Then
	\[
	X_A\cong_{\PL}X(P_A)=D(W(P_A)),
	\]
	and $\mathcal T_A$ can be constructed in polynomial time.
	
	We next form the connected sum with $Z$.
	Let $\mathcal T$ be either $\mathcal T_A$ or
	$\mathcal Z$, choose a paired tetrahedral facet of one of its
	simplices and unglue this pair of facets. Insert the fixed block
	\[
	\bigl(\Delta^3\times I\bigr)
	\mathbin{\cup}_{\partial\Delta^3\times I}
	\bigl(\Delta^3\times I\bigr)
	\cong_{\PL}S^3\times I.
	\]
	Here each tetrahedral prism $\Delta^3\times I$ carries its standard
	staircase triangulation into four $4$-simplices, and corresponding vertical
	faces of the two prisms are identified. Attach the two tetrahedral ends in
	one boundary component of this block to the two sides of the opened facet
	pairing, transporting the vertex labels by the original gluing map. Leave
	the two tetrahedral ends in the other boundary component unglued; with
	their induced face pairings, they form a standard two-tetrahedron
	triangulation of $S^3$. 
	The resulting triangulation represents $|\mathcal T|$ with an open PL $4$-ball removed, while retaining every original simplex and adding exactly eight new simplices.
	
	Perform this puncture in both $\mathcal T_A$ and $\mathcal Z$, and
	identify the two resulting two-tetrahedron boundary $3$-spheres by a
	fixed simplicial isomorphism. Denote the resulting
	triangulation by $\mathcal T_{A,Z}$, and set
	\[
	X_{A,Z}:=|\mathcal T_{A,Z}|.
	\]
	Then
	\(
	X_{A,Z}\cong_{\PL}X_A\cs Z.
	\)
	Neither puncture removes an original simplex, and the final boundary
	identifications add none. Hence
	\[
	f_4(\mathcal T_{A,Z})
	=f_4(\mathcal T_A)+f_4(\mathcal Z)+8+8
	=f_4(\mathcal T_A)+f_4(\mathcal Z)+16
	=f_4(\mathcal T_A)+O_Z(1).
	\]
	Since $\mathcal Z$ is fixed and each inserted block has fixed size,
	$\mathcal T_{A,Z}$ can be constructed in polynomial time and always
	represents a closed PL $4$-manifold.
	
	If $\val(A)=1$ in $F$, then Lemma~\ref{lem:PA} says that $P_A$ is
	$2$-AC-trivial. Lemma~\ref{lem:AC-standard-double} gives
	\[
	X_A\cong_{\PL}\cs_{2}(S^2\times S^2),
	\]
	and hence
	\[
	X_{A,Z}\cong_{\PL}Z\cs_{2}(S^2\times S^2).
	\]
	
	Suppose that $\val(A)\neq1$ in $F$, and put
	\[
	G_A:=G(P_A),
	\qquad
	H:=\pi_1(Z).
	\]
	Lemma~\ref{lem:PA} gives an embedding $F\hookrightarrow G_A$, so
	$G_A$ is nontrivial. Lemma~\ref{lem:pi1-double} and the
	Seifert--van Kampen theorem give
	\[
	\pi_1(X_{A,Z})\cong G_A*H.
	\]
	On the other hand, $\cs_2(S^2\times S^2)$ is simply connected, and
	therefore
	\[
	\pi_1\left(Z\cs_2(S^2\times S^2)\right)\cong H.
	\]
	The groups $G_A$ and $H$ are finitely generated. By the
	Grushko--Neumann theorem,
	\[
	d(G_A*H)=d(G_A)+d(H)>d(H).
	\]
	Thus the two fundamental groups are not isomorphic, and so
	\[
	X_{A,Z}\not\cong_{\PL}
	Z\cs_2(S^2\times S^2).
	\]
	We have therefore established that 
	\[
	\val(A)=1\text{ in }F
	\quad\Longleftrightarrow\quad
	|\mathcal T_{A,Z}|\cong_{\PL}
	Z\cs_2(S^2\times S^2).
	\]
	
	Since $\CWP(F)$ is PSPACE-complete \cite[Corollary~56]{BFLaW}, the
	fixed-target recognition problem is PSPACE-hard. Since
	$\mathrm{NP}\subseteq\mathrm{PSPACE}$, it is also NP-hard.
	
	Finally, fix $n\geq2$ and take
	\[
	Z=\cs_{n-2}(S^2\times S^2),
	\]
	where the case $n=2$ means $Z=S^4$. The target in the preceding
	reduction is then $\cs_n(S^2\times S^2)$. This proves every assertion
	of Theorem~\ref{thm:main}.
	
	\section{Remarks on lower simply connected targets}
	\label{sec:final-remarks}
	
	Theorem~\ref{thm:main} gives PSPACE-hardness for every target obtained by
	adding two $S^2\times S^2$ summands to a fixed closed connected PL
	$4$-manifold. Within the family $\cs_n(S^2\times S^2)$, it leaves open
	the two lower members $S^4$ and $S^2\times S^2$. We now explain why
	these targets are not reached by the direct presentation--thickening
	construction used above.
	
	This direct pipeline imposes two kinds of constraints on its output. The
	algebraic stage must preserve enough of the source group to distinguish
	no-instances, while producing a presentation with a controlled
	$3$-deformation type in the yes-case. The topological stage then converts
	that $3$-deformation type into the connected-sum decomposition of the
	target.
	
	More precisely, suppose that $P$ has $n$ generators and $m$
	relators, and that its presentation complex $3$-deforms to
	\[
	Y_{p,q}:=\bigvee_p S^1\vee\bigvee_q S^2,\quad\quad\quad p,q\geq 0.
	\]
	By Lemma~\ref{lem:zero-w2-product} and
	Theorem~\ref{thm:spine-transport}, $Y_{p,q}$ is realised as a spine of $N(P)$ with $w_2=0$. 
	The thickening classification,
	Theorem~\ref{thm:thickening-classification}, therefore gives
	\[
	N(P)\cong_{\PL}
	\bs_p(S^1\times D^4)
	\bs_q(S^2\times D^3).
	\]
	Taking boundaries yields
	\begin{equation}
		\label{eq:relative-wedge-target}
		X(P)\cong_{\PL}
		\cs_{p}(S^1\times S^3)
		\cs_{q}(S^2\times S^2),
	\end{equation}
	with the corresponding summand omitted when $p=0$ or $q=0$.
	Moreover,
	\[
	1-n+m
	=
	\chi(K(P))
	=
	\chi(Y_{p,q})
	=
	1-p+q,
	\]
	and hence
	\begin{equation}
		\label{eq:surplus-wedge}
		\defp(P)=m-n=q-p.
	\end{equation}
	Thus the relator surplus records the difference between the numbers of
	$S^2\times S^2$ and $S^1\times S^3$ summands. 
	For the particular algebraic input used here, normal generation gives a
	further restriction. Apply the modified Miller--Tancer construction to
	$Q_A$ with a list
	\[
	S=(s_1,\dots,s_{m'}).
	\]
	Since $Q_A$ is balanced, the resulting presentation has surplus
	$m'$. When $\val(A)=1$ in $F$, Theorem~\ref{thm:miller-tancer} identifies
	its group with
	\[
	F\big/
	\left\langle\!\left\langle
	s_1,\dots,s_{m'}
	\right\rangle\!\right\rangle .
	\]
	The standard presentation of $F$ shows that
	\[
	F_{\mathrm{ab}}\cong\mathbb Z^2.
	\]
	Consequently, the quotient of $F$ by the normal closure of fewer than
	two elements cannot be trivial---after abelianisation, at most one
	relation has been imposed on $\mathbb Z^2$. Within the present choice
	of source group and algebraic gadget, a simply connected yes-case
	therefore requires at least two auxiliary words. In view of
	\eqref{eq:surplus-wedge}, the first simply connected target accessible
	through a wedge spine is correspondingly
	$
	\cs_2(S^2\times S^2).
	$
	
	This explains one obstruction to obtaining $S^2\times S^2$---such a
	target would require a surplus-one presentation whose yes-case spine
	$3$-deforms to $S^2$, whereas a single auxiliary word cannot
	normally generate $F$. Reaching $S^4$ would require a balanced
	output whose yes-case spine admits a $3$-deformation to a point. It is not known whether every balanced presentation
	of the trivial group has a presentation complex that $3$-deforms to a point. 
	This is an Andrews--Curtis-type requirement rather than a consequence of the group being trivial.
	
	These constraints indicate several possible routes towards lower
	targets---a source group requiring fewer normal generators, an algebraic gadget
	which decouples the relator surplus from the number of trivialising
	words, or a geometric operation which removes a summand while
	preserving the fundamental-group obstruction in the no-case.
		
	\bibliographystyle{plainurl}
	\bibliography{references}
	
\end{document}